\newtheorem{theorem}{Theorem}
\newtheorem{lemma}[theorem]{Lemma}
\newtheorem{corollary}[theorem]{Corollary}
\newtheorem{proposition}[theorem]{Proposition}
\newtheorem{remark}[theorem]{Remark}
\newcommand{\R}{\mathbb{R}}
\DeclareMathOperator{\diver}{div}
\DeclareMathOperator{\res}{Res}
\definecolor{grisoscuro}{gray}{.70}
\definecolor{grismedio}{gray}{.85}
\definecolor{grisclaro}{gray}{1.0}
\author{A. Gasull}
\address{Departament de Matem\`{a}tiques \\
Universitat Aut\`{o}noma de Barcelona \\ Edifici C. 08193 Bellaterra, Barcelona. Spain}
\email{gasull@mat.uab.cat}
\author{H. Giacomini}
\address{Laboratoire de Math\'{e}matiques et Physique Th\'{e}orique.
 Facult\'{e} des
Sciences et Techniques. Universit\'{e} de Tours, C.N.R.S. UMR 7350. 37200 Tours.
France} \email{Hector.Giacomini@lmpt.univ-tours.fr}
\author{S. P\'erez-Gonz\'alez}
\address{Departament de Matem\`{a}tiques \\
Universitat Aut\`{o}noma de Barcelona \\ Edifici C. 08193 Bellaterra, Barcelona. Spain}
\email{setperez@mat.uab.cat}
\author{J. Torregrosa}
\address{Departament de Matem\`{a}tiques \\
Universitat Aut\`{o}noma de Barcelona \\ Edifici C. 08193 Bellaterra, Barcelona. Spain}
\email{torre@mat.uab.cat}
\thanks{The first, third and fourth authors are supported by the
 MINECO/FEDER grant number MTM2008-03437 and the Generalitat de Catalunya grant number
2009SGR410}
\subjclass[2010]{Primary 34C37, Secondary: 34C05, 34C07, 37G15}
\keywords{Homoclinic connection, location of limit cycles, bifurcation of limit
cycles, global description of bifurcation curve}
\date{}
\begin{document}

\title[A proof of Perko's conjectures for the B-T system]
{A proof of Perko's conjectures for the Bogdanov-Takens system}

\begin{abstract}
The Bogdanov-Takens system has at most one limit cycle and, in the
parameter space, it exists between a Hopf and a saddle-loop
bifurcation curves. The aim of this paper is to prove the Perko's
conjectures about some analytic properties of the saddle-loop
bifurcation curve. Moreover, we provide sharp piecewise algebraic
upper and lower bounds for this curve.
\end{abstract}

\maketitle

\section{Introduction}\label{se:1}

The Bogdanov-Takens system
\begin{equation*}
\left\{{\begin{array}{l}
x'=y,\\
y'=-n+by+x^2+xy,
\end{array}}\right.
\end{equation*}
has been introduced in~\cite{Bog1975,Tak1974,Tak1974b}. It provides a universal
unfolding of a cusp point of codimension 2 and it is considered in many basic text
books on bifurcation theory; see for instance~\cite{ChoLiWan94,GucHol83,Kuz98}.
Some global quantitative properties of its bifurcation diagram are not known. In
1992, Perko stated two conjectures about analytic properties of the saddle-loop
bifurcation curve in the parameter space; see \cite{Per92}. The aim of this work is
to prove both conjectures.

The interesting bifurcations only appear in the region $n>0,$
because in this case the system has two critical points, $(\pm
\sqrt{n},0)$, a saddle and a focus. Therefore it is natural to
introduce a new positive parameter $m=\sqrt{n}$. So, we will
consider the following expression of the above system
\begin{equation}\label{sys:bt_original}
\left\{{\begin{array}{l}
x'=y,\\
y'=-m^2+by+x^2+xy,\quad\mbox{with}\quad m>0.
\end{array}}\right.
\end{equation}

Before presenting our results, we recall the known properties about
the bifurcation diagram of system~\eqref{sys:bt_original}. All the
qualitative information of this diagram and part of the quantitative
one are known; see~\cite{LiRouWan90,Per92, Per, RouWag95}. In
particular, it is proved in~\cite{LiRouWan90} that this system has
at most one limit cycle and that when it exists it is hyperbolic and
unstable. This information, together with the fact that
system~\eqref{sys:bt_original} is a rotated family of vector fields
with respect to $b$, allow to show that the limit cycle exists if
and only if $b^*(m)<b< m $, for an unknown function $b^*(m)$. This
holds because fixing $m$ and decreasing $b$, a unique unstable limit
cycle borns via a Hopf bifurcation for $b= m $, increases
diminishing $b$, and disappears in a saddle-loop connection for
$b=b^*(m)$. The corresponding phase portraits are drawn in
Figure~\ref{fig:rotatorio} and a sketch of its bifurcation diagram
is given in Figure~\ref{fig:regiones}.

\begin{figure}[h]
\begin{center}
\begin{tabular}{cccc}
\psset{xunit=0.8cm,yunit=0.8cm,algebraic=true,arrowsize=5pt}
\begin{pspicture*}(-2.6,-1.5)(1,1.5)
\psdots[dotsize=3pt](0,0)(-1.5,0)
\psecurve[linewidth=0.5pt,plotpoints=500]{->}(1,-2)(0.5,-1)(0,-0.65)(-1.9,-0.9)
(-2.4,0)(-1.8,0.9)(-1,0.85)(-0.25,0.25)(0,0)
\psecurve[linewidth=0.5pt,plotpoints=500]{-<}(1,-2)(0.5,-1)(0,-0.65)(-1.9,-0.9)
(-2.4,0)(-1.8,0.9)(-1,0.85)(-0.25,0.25)(0,0)(0.35,-0.35)(0.5,-0.5)
\psecurve[linewidth=1.5pt,plotpoints=500]{-}(1,-2)(0.5,-1)(0,-0.65)(-1.9,-0.9)
(-2.4,0)(-1.8,0.9)(-1,0.85)(-0.25,0.25)(0,0)(0.35,-0.35)(0.5,-0.5)(1,-1)
\psecurve[linewidth=0.5pt,plotpoints=500]{-<}(-1.55,0)(-1.5,-0.05)(-1.4,0)(-1.5,0.15)
(-1.75,0)(-1.5,-0.35)(-0.95,0)(-1.5,0.6)(-2.15,0)(-1.9,-0.6)(-1.2,-0.7)(-0.25,-0.25)(0,0)
\psecurve[linewidth=0.5pt,plotpoints=500]{->}(-1.55,0)(-1.5,-0.05)(-1.4,0)(-1.5,0.15)
(-1.75,0)(-1.5,-0.35)(-0.95,0)(-1.5,0.6)(-2.15,0)(-1.9,-0.6)(-1.2,-0.7)(-0.25,-0.25)(0,0)
(0.35,0.35)(0.5,0.5)
\psecurve[linewidth=1.5pt,plotpoints=500]{-}(-1.55,0)(-1.5,-0.05)(-1.4,0)(-1.5,0.15)
(-1.75,0)(-1.5,-0.35)(-0.95,0)(-1.5,0.6)(-2.15,0)(-1.9,-0.6)(-1.2,-0.7)(-0.25,-0.25)
(0,0)(0.35,0.35)(0.5,0.5)(1,1)
\psecurve[linewidth=0.5pt,arrowsize=2pt,plotpoints=500]{->}(-0.5,0.5)(-0.25,0.4)(0,0.3)
(0.25,0.4)(0.5,0.5)
\psecurve[linewidth=0.5pt,arrowsize=2pt,plotpoints=500]{->}(-0.65,0.35)(-0.5,0.25)(-0.3,0)
(-0.5,-0.25)(-0.65,-0.35)
\psecurve[linewidth=0.5pt,arrowsize=2pt,plotpoints=500]{->}(0.65,-0.35)(0.5,-0.25)(0.3,0)
(0.5,0.25)(0.65,0.35)
\end{pspicture*}
& \psset{xunit=0.8cm,yunit=0.8cm,algebraic=true,arrowsize=5pt}
\begin{pspicture*}(-2.6,-1.5)(1,1.5)
\psdots[dotsize=3pt](0,0)(-1.5,0)
\psecurve[linewidth=0.5pt,plotpoints=500]{>-<}(0.5,-0.5)(0.35,-0.35)(0,0)(-0.35,0.35)(-0.5,0.5)
\psecurve[linewidth=0.5pt,plotpoints=500]{<->}(-0.5,-0.5)(-0.35,-0.35)(0,0)(0.35,0.35)(0.5,0.5)
\psecurve[linewidth=1.5pt,plotpoints=500]{-}(1,-1)(0.5,-0.5)(0.35,-0.35)(0,0)(-0.25,0.25)
(-1,0.75)(-1.8,0.75)(-2.3,0)(-1.8,-0.8)(-1,-0.75)(-0.35,-0.35)(0,0)(0.25,0.25)(0.5,0.5)(1,1)
\psecurve[linewidth=0.5pt,plotpoints=500,arrowsize=3pt]{-<}(-1.55,0)(-1.5,-0.05)(-1.4,0)
(-1.5,0.15)(-1.75,0)(-1.5,-0.35)(-0.95,0)(-1.5,0.6)(-2.15,0)(-1.9,-0.6)(-1.2,-0.7)(-0.45,-0.25)
\psecurve[linewidth=0.5pt,plotpoints=500]{-}(-1.55,0)(-1.5,-0.05)(-1.4,0)(-1.5,0.15)(-1.75,0)
(-1.5,-0.35)(-0.95,0)(-1.5,0.6)(-2.15,0)(-1.9,-0.6)(-1.2,-0.7)(-0.45,-0.25)(-0.25,0)
\psecurve[linewidth=0.5pt,arrowsize=2pt,plotpoints=500]{->}(-0.5,0.5)(-0.25,0.4)(0,0.3)
(0.25,0.4)(0.5,0.5)
\psecurve[linewidth=0.5pt,arrowsize=2pt,plotpoints=500]{<-}(-0.5,-0.5)(-0.25,-0.4)(0,-0.3)
(0.25,-0.4)(0.5,-0.5)
\psecurve[linewidth=0.5pt,arrowsize=2pt,plotpoints=500]{->}(0.65,-0.35)(0.5,-0.25)(0.3,0)
(0.5,0.25)(0.65,0.35)
\end{pspicture*}
& \psset{xunit=0.8cm,yunit=0.8cm,algebraic=true,arrowsize=5pt}
\begin{pspicture*}(-2.6,-1.5)(1,1.5)
\psdots[dotsize=3pt](0,0)(-1.5,0)
\psecurve[linewidth=0.5pt,plotpoints=500]{-<}(1,2)(0.5,1)(0,0.65)(-1.9,0.9)(-2.4,0)(-1.8,-0.9)
(-1,-0.85)(-0.25,-0.25)(0,0)
\psecurve[linewidth=0.5pt,plotpoints=500]{->}(1,2)(0.5,1)(0,0.65)(-1.9,0.9)(-2.4,0)(-1.8,-0.9)
(-1,-0.85)(-0.25,-0.25)(0,0)(0.35,0.35)(0.5,0.5)
\psecurve[linewidth=1.5pt,plotpoints=500]{-}(1,2)(0.5,1)(0,0.65)(-1.9,0.9)(-2.4,0)(-1.8,-0.9)
(-1,-0.85)(-0.25,-0.25)(0,0)(0.35,0.35)(0.5,0.5)(1,1)
\psecurve[linewidth=0.5pt,plotpoints=500]{->}(-0.55,0)(-1.15,-0.5)(-1.7,-0.5)(-2.2,0)
(-1.9,0.6)(-1.2 ,0.7)(-0.25,0.25)(0,0)
\psecurve[linewidth=0.5pt,plotpoints=500]{-<}(-0.55,0)(-1.15,-0.5)(-1.7,-0.5)(-2.2,0)
(-1.9,0.6)(-1.2,0.7)(-0.25,0.25)(0,0)(0.35,-0.35)(0.5,-0.5)
\psecurve[linewidth=1.5pt,plotpoints=500]{-}(-0.55,0)(-1.15,-0.5)(-1.7,-0.5)(-2.2,0)(-1.9,0.6)
(-1.2,0.7)(-0.25,0.25)(0,0)(0.35,-0.35)(0.5,-0.5)(1,-1)
\psecurve[linewidth=0.5pt,plotpoints=500,arrowsize=3pt]{<-}(-1.6,0)(-1.5,-0.1)(-1.3,0)(-1.5,
0.25)(-1.75,0)(-1.5,-0.25)(-1,-0.1)(-0.95,0)
\psccurve[linewidth=1.5pt,linecolor=blue,plotpoints=500](-1.5,0.4)(-0.65,0)
(-1.5,-0.4)(-2,0)
\psline[linewidth=1.5pt,linecolor=blue]{-<}(-1.499,0.4)(-1.5,0.4)
\psecurve[linewidth=0.5pt,arrowsize=2pt,plotpoints=500]{<-}(-0.5,-0.5)
(-0.25,-0.4)(0,-0.3)(0.25,-0.4)(0.5,-0.5)
\psecurve[linewidth=0.5pt,arrowsize=2pt,plotpoints=500]{->}(-0.65,0.35)(-0.5,0.25)(-0.3,0)
(-0.5,-0.25)(-0.65,-0.35)
\psecurve[linewidth=0.5pt,arrowsize=2pt,plotpoints=500]{->}(0.65,-0.35)(0.5,-0.25)(0.3,0)
(0.5,0.25)(0.65,0.35)
\end{pspicture*}
& \psset{xunit=0.8cm,yunit=0.8cm,algebraic=true,arrowsize=5pt}
\begin{pspicture*}(-2.6,-1.5)(1,1.5)
\psdots[dotsize=3pt](0,0)(-1.5,0)
\psecurve[linewidth=0.5pt,plotpoints=500]{-<}(1,2)(0.5,1)(0,0.65)(-1.9,0.9)(-2.4,0)(-1.8,-0.9)
(-1,-0.85)(-0.25,-0.25)(0,0)
\psecurve[linewidth=0.5pt,plotpoints=500]{->}(1,2)(0.5,1)(0,0.65)(-1.9,0.9)(-2.4,0)(-1.8,-0.9)
(-1,-0.85)(-0.25,-0.25)(0,0)(0.35,0.35)(0.5,0.5)
\psecurve[linewidth=1.5pt,plotpoints=500]{-}(1,2)(0.5,1)(0,0.65)(-1.9,0.9)(-2.4,0)(-1.8,-0.9)
(-1,-0.85)(-0.25,-0.25)(0,0)(0.35,0.35)(0.5,0.5)(1,1)
\psecurve[linewidth=0.5pt,plotpoints=500]{->}(-1.55,0)(-1.5,0.05)(-1.4,0)(-1.5,-0.15)(-1.75,0)
(-1.5,0.35)(-0.95,0)(-1.5,-0.6)(-2.15,0)(-1.9,0.6)(-1.2,0.7)(-0.25,0.25)(0,0)
\psecurve[linewidth=0.5pt,plotpoints=500]{-<}(-1.55,0)(-1.5,0.05)(-1.4,0)(-1.5,-0.15)(-1.75,0)
(-1.5,0.35)(-0.95,0)(-1.5,-0.6)(-2.15,0)(-1.9,0.6)(-1.2,0.7)(-0.25,0.25)(0,0)(0.35,-0.35)
(0.5,-0.5)
\psecurve[linewidth=1.5pt,plotpoints=500]{-}(-1.55,0)(-1.5,0.05)(-1.4,0)(-1.5,-0.15)(-1.75,0)
(-1.5,0.35)(-0.95,0)(-1.5,-0.6)(-2.15,0)(-1.9,0.6)(-1.2,0.7)(-0.25,0.25)(0,0)(0.35,-0.35)
(0.5,-0.5)(1,-1)
\psecurve[linewidth=0.5pt,arrowsize=2pt,plotpoints=500]{<-}(-0.5,-0.5)(-0.25,-0.4)(0,-0.3)
(0.25,-0.4)(0.5,-0.5)
\psecurve[linewidth=0.5pt,arrowsize=2pt,plotpoints=500]{->}(-0.65,0.35)(-0.5,0.25)(-0.3,0)
(-0.5,-0.25)(-0.65,-0.35)
\psecurve[linewidth=0.5pt,arrowsize=2pt,plotpoints=500]{->}(0.65,-0.35)(0.5,-0.25)(0.3,0)
(0.5,0.25)(0.65,0.35)
\end{pspicture*}\\
(i) $b<b^*(m)$ & (ii) $b=b^*(m)$ & (iii) $b^*(m)<b< m $ & (iv) $ b
\geq m$
\end{tabular}
\end{center}
\caption{\small Phase portraits of system~\eqref{sys:bt_original}}\label{fig:rotatorio}
\end{figure}
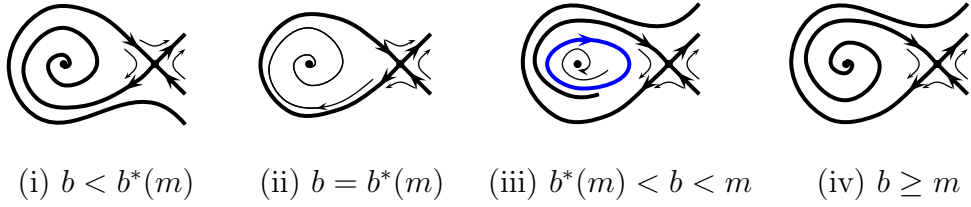

Some quantitative information about $b^*(m)$ is given by Perko in~\cite{Per92}:
\begin{enumerate}[(i)]
\item It is an analytic function.
\item It holds that $\max(-m,m-1)<b^*(m)< m .$
\item At $m=0$, $ b^*(m)=5 m/7+O(m^2). $ This term is computed by using the Melnikov method; see
also~\cite{GucHol83}.
\end{enumerate}

As usual, we write $f(m)=O(m^p)$ or $g(m)=o(m^p)$ at $m=m_0\in\R\cup\{\infty\}$ if
\[
\lim_{m\to m_0} \frac{f(m)}{m^p}=K\in\R,\quad \mbox{or}\quad \lim_{m\to m_0}\frac{g(m)}{m^p}=0.
\]

The lower bound given in item (ii) is improved in \cite{Hay04} applying the
Bendixson-Dulac Theorem and proving that $\max( m /2, m -1)<b^*(m)< m .$

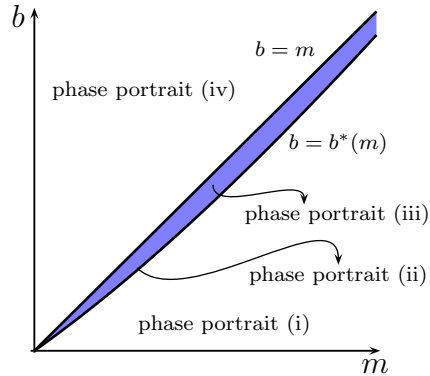
\begin{figure}[h]
\begin{center}
\psset{xunit=0.5cm,yunit=0.5cm,algebraic=true,arrowsize=3pt}
\begin{pspicture*}(-1,-0.6)(11,9.5)
\psline[linewidth=.7pt]{->}(-0.1,0)(9.1,0)
\psline[linewidth=.7pt]{->}(0,-0.05)(0,9.1)
\pscustom[fillstyle=solid,fillcolor=-green!50!red,linestyle=none]{
\psplot{0}{9}{(x)} \psplot{9}{0}{5*x/7+72*x^2/2401-30024*x^3/45294865}}
\psplot[linestyle=solid,linewidth=1pt,plotpoints=300]{0}{9}{(x)}
\psplot[linestyle=solid,linewidth=1pt,plotpoints=300]{0}{9}{5*x/7+72*x^2/2401-30024*x^3/45294865}
\rput(7.1,3.6){\rnode{C}{\phantom{xxxx}\tiny{phase portrait (iii)}}}
\rput(4.7,4.4){\pnode{D}}
\nccurve[linewidth=0.5pt,angleA=90,angleB=-90]{<-}{C}{D}
\rput(8,2){\rnode{E}{\tiny{phase portrait (ii)}}}
\rput(2.7,2.2){\pnode{F}}
\nccurve[linewidth=0.5pt,angleA=90,angleB=-45]{<-}{E}{F}
\put(4.3,-0.3){$m$} \put(-0.3,4.3){$b$}
\rput(2.9,6.9){\tiny{phase portrait (iv)}} \rput(5,0.7){\tiny{phase portrait (i)}}
\rput(6.6,8){\tiny{$b=m$}}
\rput(8,5.5){\tiny{$b=b^*(m)$}}
\end{pspicture*}
\end{center}
\caption{\small Sketch of the bifurcation diagram of system \eqref{sys:bt_original}. The open
colored region is the one containing the limit cycle}\label{fig:regiones}
\end{figure}

Item (iii) has been improved recently in~\cite{GasGiaTor10} using a
different approach, based on the construction of algebraic curves
with a loop that is without contact for the flow of the system. The
authors obtain that, at $m=0$,
\begin{align}\label{nonli}
b^*(m)&=\frac5 7 m+\frac{72}{2401}m^2 -\frac{30024}{45294865}m^3-
\frac{2352961656}{11108339166925} m^4+O(m^5).
\end{align}

The Bogdanov-Takens system for parameters in a neighborhood of infinity is also
studied in~\cite{Bou1991}. The aim of that work was to understand the presence of
the limit cycle for the system in terms of slow-fast dynamics. No quantitative
information about the shape of the curve $b=b^*(m)$ is given there.

In \cite{Per92} a different, but equivalent, expression of system
\eqref{sys:bt_original} is considered. Next conjectures correspond to Perko's ones
translated to \eqref{sys:bt_original}.

\newpage
\noindent{\bf Perko's Conjectures (\cite{Per92}):} Let $b=b^*(m)$ the function
that corresponds to the saddle-loop bifurcation curve for
system~\eqref{sys:bt_original}. Then,
\begin{itemize}
\item [($\mathcal{I}$)] for $m$ large enough, $b^*(m)= m -1+O(\frac 1{\sqrt{m}})$,
\item [($\mathcal{II}$)] it holds that $\max(5 m /7, m -1)<b^*(m).$
\end{itemize}

To facilitate the reading of this work, original Perko's formulation of above conjectures is
recalled in Section~\ref{ap:perko}.

Both conjectures are immediate consequences of Theorems~\ref{thm:n_grande} and 
\ref{thm:cota_global}. Moreover, Theorem \ref{thm:cota_global} significatively improves the global
lower and upper bounds given above.

\begin{theorem}\label{thm:n_grande}
For $m$ large enough, $b^*(m)= m -1+o\left(\frac1m\right).$
\end{theorem}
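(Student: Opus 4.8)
The plan is to pin $b^*(m)$ from both sides by exhibiting, for suitable values of $b$, a loop through the saddle that the flow crosses with a fixed sign (a \emph{loop without contact}). Since \eqref{sys:bt_original} is a rotated family of vector fields in $b$ and has at most one limit cycle, each such curve forces the saddle separatrices to one definite side and hence certifies that $b$ lies on a definite side of $b^*(m)$. I would then choose the curves so that the resulting upper and lower bounds $b_-(m)\le b^*(m)\le b_+(m)$ both satisfy $b_\pm(m)=m-1+o(1/m)$, which gives the statement.

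First I would expose the large-$m$ geometry. The saddle sits at $(m,0)$ and the focus at $(-m,0)$, and for the conservative part, with $H=\tfrac12 y^2-\tfrac{x^3}{3}+m^2x$, one computes $\dot H=(b+x)y^2$; on the level $H=\tfrac23 m^3$ this yields the unperturbed loop $y^2=\tfrac23(x-m)^2(x+2m)$, which shows that on the relevant scale $x\sim m$ and $y\sim m^{3/2}$. Rescaling $x=mu$, $y=m^{3/2}v$, $\tau=\sqrt m\,t$ turns the system into a Liénard equation with large damping parameter $\nu=\sqrt m$, whose slow manifold is $y=\dfrac{m^2-x^2}{b+x}$, with a vertical asymptote on the line $x=-b$ where $\dot H$ changes sign. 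The leading behaviour $b^*(m)\approx m-1$ should emerge as the requirement that this asymptote lie one unit to the right of the focus, so that the relaxation loop closes up around $(-m,0)$; equivalently, it is the value for which the divergence $b+x$ equals $-1$ at the focus.

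With this picture in hand I would search for the bounding curves as explicit algebraic (or piecewise-algebraic) perturbations of the unperturbed loop, of the form $\{V(x,y)=0\}$ through $(m,0)$, and compute $\dot V=V_x\,y+V_y\,\bigl(x^2-m^2+(b+x)y\bigr)$. Forcing $\dot V$ to keep a constant sign along the oval, uniformly for large $m$, translates into an inequality on $b$, and optimizing the free coefficients of $V$ produces the two bounds. The exact loop condition $\oint_\Gamma (b+x)\,y\,dx=0$ is the guide: along the slow part of the true loop $\Gamma$ one has $(b+x)y\approx m^2-x^2$, which is an exact differential and contributes nothing, so $b^*(m)$ is determined entirely by the boundary layer near the focus, and the candidate curves must match $\Gamma$ there to high order.

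The main obstacle is precisely this boundary-layer region near $(-m,0)$, where the field rotates fast (frequency $\sim\sqrt m$) as the loop turns around. The conjectured accuracy $O(1/\sqrt m)$ only requires locating the gross position of the asymptote $x=-b$, but the claimed $o(1/m)$ forces control of the next terms of the matched expansion there, i.e. one must show that the sub-$1/m$ contributions of the fast-rotation zone to $\oint_\Gamma(b+x)\,y\,dx$ cancel. Concretely, the delicate step is to build non-contact curves whose contact defect in that zone is $o(1/m)$ uniformly in $m$, so that the squeeze $b_-(m)\le b^*(m)\le b_+(m)$ collapses to $m-1+o(1/m)$.
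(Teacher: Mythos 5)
Your setup is sound and your heuristics are genuinely informative: the Hamiltonian $H=\tfrac12y^2-\tfrac{x^3}{3}+m^2x$ with $\dot H=(b+x)y^2$, the unperturbed loop $y^2=\tfrac23(x-m)^2(x+2m)$, the exact identity $\oint_\Gamma(b+x)y\,dx=0$ and the role of the line $x=-b$ all correctly explain \emph{why} the answer should be $m-1$. But the argument stops exactly where the work begins. You explicitly defer ``the delicate step'' of building non-contact curves whose contact defect in the boundary layer near $(-m,0)$ is $o(1/m)$, and that step is the entire content of the theorem; nothing in your plan indicates how such curves would be produced or how the $o(1/m)$ accuracy would be certified uniformly in $m$. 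As written, a single explicit algebraic perturbation of the unperturbed oval would at best deliver $O(1/m)$ (or only the conjectured $O(1/\sqrt m)$), not $o(1/m)$.

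Two ideas in the paper close this gap, and both are absent from your proposal. First, the lower bound needs no loop at all: at $b=m-1$ the system has the invariant line $x+y-m=0$, and a Bendixson--Dulac argument with Dulac function $1/(x+y-m)$ excludes limit cycles, giving $b^*(m)>m-1$ exactly (Lemma~\ref{lem:rectainva}); since $m-1=m-1+o(1/m)$ trivially, only the upper bound requires a construction. Second, and this is the key missing device, the paper never builds one curve of $o(1/m)$ accuracy. Proposition~\ref{prop:M_grande} shows that for \emph{each fixed} $\alpha>0$ the curve $B=M-1+\alpha/M^2$ (which transforms to $b=m-1+2\alpha/m+O(1/m^2)$, Lemma~\ref{le:BM_bm}) bounds a negatively invariant region, hence forces a limit cycle, for all $M>M_\alpha$; letting $\alpha\to0$ converts this one-parameter family of $O(1/m)$ upper bounds into the $o(1/m)$ statement. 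This trick is what makes the construction feasible with fixed-degree pieces: for each $\alpha$ it suffices to exhibit a three-piece loop (a segment of the tangent line to the stable separatrix, a conic, and a cubic, glued at the focus abscissa and at $(-3M,0)$) and to verify non-contact by resultants and asymptotic expansions in $M$. You would need either to adopt this $\alpha$-family scheme or to carry out an actual matched construction in the fast-rotation zone; without one of the two, the squeeze you describe does not yield the claimed asymptotics.
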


\begin{theorem}\label{thm:cota_global}
It holds that
\[
\max\left(\frac{5m}7,m-1\right)<b^*(m)<\min\left(\dfrac{(5+\frac{37}{12} m ) m
}{7+\frac{37}{12} m }, m -1+\dfrac{25}{7 m } \right).
\]
\end{theorem}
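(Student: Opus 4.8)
The plan is to begin with the local analysis at the two finite singular points: the Jacobian of \eqref{sys:bt_original} at $(x,0)$ has determinant $-2x$ and trace $b+x$, so $(m,0)$ is a saddle (with saddle quantity $b+m$) and $(-m,0)$ a focus, attracting for $b<m$. Since \eqref{sys:bt_original} is a rotated family with respect to $b$, the homoclinic loop $\Gamma$ based at the saddle occurs for the single value $b=b^*(m)$, the unstable limit cycle grows monotonically as $b$ decreases to $b^*(m)$, and $\Gamma$ is the threshold position of the branch of the unstable separatrix $W^u$ that surrounds the focus. Thus proving $b^*(m)<B(m)$ (respectively $b^*(m)>B(m)$) reduces to showing that, at $b=B(m)$, the branch $W^u$ cannot close into a loop because it is trapped strictly on one fixed side of a suitable control curve. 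A clean complementary identity comes from integrating $\diver X=b+x$ over the region $D$ bounded by $\Gamma$: since $X$ is tangent to $\Gamma$ (and vanishes at the saddle corner), Green's formula gives $\iint_D(b+x)\,dx\,dy=0$, that is $b^*(m)=-\bar x(D)$, the negative of the abscissa of the centroid of $D$; this recovers the classical estimate $-m<b^*(m)<m$ of \cite{Per92} and reframes every bound as a bound on that centroid.

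Following the method of \cite{GasGiaTor10}, I would realize the control curves explicitly. For each target bound $B(m)$ I construct a rational function $V(x,y;m)$ whose zero set contains the saddle and has a loop $L$ enclosing the focus, and I compute the orbital derivative $\dot V=y\,V_x+(x^2+xy+by-m^2)\,V_y$. Imposing $V=0$ to eliminate the highest power of $y$ turns the restriction $\dot V|_L$ into a one–variable problem whose constant sign forces $W^u$ to stay inside (or outside) $L$, hence placing $b^*(m)$ on the required side of $B(m)$. The free coefficients of $V$ are tuned so that this sign condition holds, and the thresholds at which it first fails are exactly the closed–form expressions in the statement. Two families are needed. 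For small and moderate $m$ the curves are perturbations of the homoclinic loop of the rescaled Hamiltonian limit (the Bogdanov--Takens scaling $x=mX,\ y=m^{3/2}Y$), which explains both the Melnikov value $b^*(m)/m\to5/7$ of item (iii) and the bounds $\tfrac{5m}{7}<b^*(m)<\tfrac{(5+\frac{37}{12}m)m}{7+\frac{37}{12}m}$. For large $m$ a second family, adapted to the saddle connection as $m\to\infty$ and compatible with Theorem~\ref{thm:n_grande}, produces the upper bound $m-1+\tfrac{25}{7m}$; the matching lower bound $m-1<b^*(m)$ is already available from the Bendixson--Dulac argument of \cite{Hay04} and is recovered here as the constant Dulac case of the divergence identity above.

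The main obstacle is the verification of the constant–sign condition \emph{uniformly in the parameter} $m>0$. After using $V=0$ to eliminate $y$, the requirement $\dot V|_L\neq0$ becomes the positivity (or negativity) of a polynomial $P(x;m)$ on an interval $[x_-(m),m]$ whose left endpoint $x_-(m)$, the leftmost point of the loop, is itself an algebraic function of $m$. I would clear denominators, homogenize jointly in $x$ and $m$, and control the real zeros of $P$ through the resultant $\res_x$ and the discriminant of $P$ together with Sturm sequences, treating the signs at the two parameter–dependent endpoints separately. The genuine difficulty is that this is a two–variable real positivity problem: one must certify that $P(x;m)$ has no zero in the open interval for \emph{every} $m>0$, and the constants $\tfrac{37}{12}$ and $\tfrac{25}{7}$ are precisely the extremal values for which the governing resultant keeps a fixed sign. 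Once these certificates are in place, the trapping of $W^u$ is immediate, and the four inequalities of Theorem~\ref{thm:cota_global} follow, the two upper bounds combined by taking the minimum and the two lower bounds by taking the maximum.
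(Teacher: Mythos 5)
Your overall strategy --- trapping the relevant separatrix with an explicit loop on which the orbital derivative has constant sign, and certifying that sign with resultants and Sturm sequences --- is exactly the method of the paper (inherited from \cite{GasGiaTor10}), and your split into a small-$m$ family and a large-$m$ family matches Sections~\ref{se:3} and~\ref{se:4}. But two of your concrete claims would fail as stated. First, for large $m$ no single algebraic loop $V(x,y;m)=0$ is known to work: the paper must build the trapping curve \emph{piecewise} from three arcs of different algebraic curves (the $F_1,F_2,F_3$ of Proposition~\ref{prop:M_grande}), glued at the focus abscissa and at the point $(-3M,0)$ chosen from the numerically observed asymptotics of the separatrices, and the sign certificates there come from asymptotic expansions of the roots at $M=\infty$, not from a Sturm sequence uniform in the parameter. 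Your plan to ``homogenize jointly in $x$ and $m$'' and run one two-variable positivity certificate for all $m>0$ is precisely the step the authors could not carry out; even in the small-$m$ regime the degree of the approximating curve must be raised partway through the parameter interval (the $N=1$ construction breaks near $s\approx 5.08$ and $N=2$ is needed on $[5,7)$), and the computation only becomes tractable after the rational parametrization $(m(s),b(s))=\bigl(\tfrac{7s^2}{2(6s+7)},\tfrac{5s^2}{2(6s+7)}\bigr)$ of the line $7b=5m$, which makes the saddle eigenvalues rational --- a device absent from your sketch.

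Second, the constants $\tfrac{37}{12}$ and $\tfrac{25}{7}$ are \emph{not} extremal values at which a governing resultant changes sign. The upper bounds actually produced by the loop constructions are a degree-$14$ curve $D(M,B)=0$ (degree $25$ in $(m,b)$) for $0<M\le 30$, and $b=m-1+\tfrac{51}{20m}$ for $m>\widetilde m\approx 6.93$; the two expressions in the statement are weaker, hand-picked bounds ($\tfrac{25}{7}>\tfrac{51}{20}$) chosen to dominate these and to be continuous at $m=7$, and the final step of the proof is a separate resultant-plus-Sturm comparison showing that $b_u$ lies above the translated curve $E(m,b)=0$ on $(0,7]$. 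So tuning loop coefficients until the bound ``first fails'' would not land on these constants. (Also, your Green's-formula identity $b^*=-\bar x(D)$ only yields $b^*>-m$ directly; the companion inequality $b^*<m$ does not follow from it, since the loop extends to the left of the focus.)
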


To prove our results we develop the method introduced in \cite{GasGiaTor10}, adapting it according
to small or large values of $m$. The basic idea is as follows: for given positive values of $b$ and
$m$ such that $b< m $, we want to know if $b>b^*(m)$ or $b<b^*(m)$; or equivalently to prove the
existence or non-existence of the limit cycle. Due to the uniqueness and hyperbolicity of the limit
cycle these two situations can be distinguished constructing negative or positively invariant
regions, as it is shown in Figure~\ref{fig:lazos}, and employing the Poincar\'{e}-Bendixson Theorem.

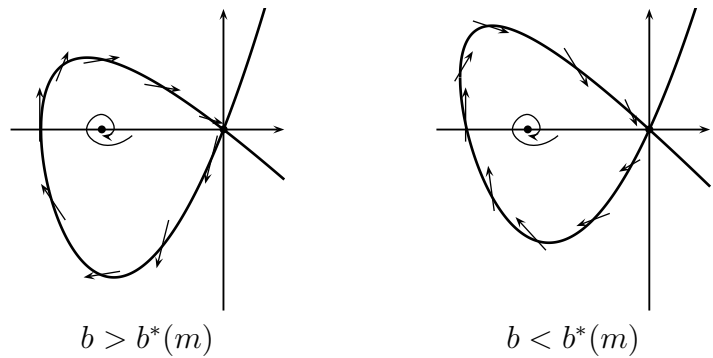
\begin{figure}[h]
\begin{center}
\begin{tabular}{c@{\hspace{2cm}}c}
\psset{xunit=0.8cm,yunit=0.8cm,algebraic=true,arrowsize=3pt}
\begin{pspicture*}(-3.5,-3)(1,2)
\psline[linewidth=.7pt]{->}(-3.5,0)(1,0)
\psline[linewidth=.7pt]{->}(0,-3)(0,2)
\psdots[dotsize=3pt](0,0)(-2,0)
\psplot[linestyle=solid,linewidth=1pt,plotpoints=1500]{-3}{1}{
2/9*1/(8256+141*sqrt(226)+266*x)*((11008*x+188*x*sqrt(226)+33024+564*sqrt(226)+sqrt(158811104*x^2 +
4645339*x^2*sqrt(226)+1844762403*x+57783633*x*sqrt(226)+4104987273+131542848*sqrt(226)))*x)}
\psplot[linestyle=solid,linewidth=1pt,plotpoints=1500]{-3}{1}{
2/9*1/(8256+141*sqrt(226)+266*x)*((11008*x+188*x*sqrt(226)+33024+564*sqrt(226)-sqrt(158811104*x^2
+ 4645339*x^2*sqrt(226)+1844762403*x+57783633*x*sqrt(226)+4104987273+131542848*sqrt(226)))*x)}
\psline[linestyle=solid,linewidth=0.5pt]{->}(-3.02,-0.2)(-3.02,0.7)
\psline[linestyle=solid,linewidth=0.5pt]{->}(-2.75,0.8)(-2.55,1.3)
\psline[linestyle=solid,linewidth=0.5pt]{->}(-2.3,1.1)(-1.7,1.2)
\psline[linestyle=solid,linewidth=0.5pt]{->}(-1.3,0.75)(-0.7,0.65)
\psline[linestyle=solid,linewidth=0.5pt]{->}(-0.4,0.22)(0,0.1)
\psline[linestyle=solid,linewidth=0.5pt]{->}(-2.6,-1.5)(-3,-0.9)
\psline[linestyle=solid,linewidth=0.5pt]{->}(-1.7,-2.35)(-2.3,-2.45)
\psline[linestyle=solid,linewidth=0.5pt]{->}(-0.9,-1.5)(-1.1,-2.3)
\psline[linestyle=solid,linewidth=0.5pt]{->}(-0.1,-0.1)(-0.3,-0.9)
\psecurve[linewidth=0.5pt,plotpoints=500,arrowsize=3pt]{<-}(-2.1,0)(-2.,-0.1)(-1.8,0)
(-2.,0.25)(-2.25,0)(-2.,-0.25)(-1.5,-0.1)(-1.45,0)
\end{pspicture*}
&
\psset{xunit=0.8cm,yunit=0.8cm,algebraic=true,arrowsize=3pt}
\begin{pspicture*}(-3.5,-3)(1,2)
\psline[linewidth=.7pt]{->}(-3.5,0)(1,0)
\psline[linewidth=.7pt]{->}(0,-3)(0,2)
\psdots[dotsize=3pt](0,0)(-2,0)
\psplot[linestyle=solid,linewidth=1pt,plotpoints=1500]{-3.110777}{1}{
(1/36)*(3549202749*x+403794459*x*sqrt(3121)+10770252205+834165035*sqrt(3121)
+sqrt(536203659176907980562*x^2+4006983259637287902*x^2*sqrt(3121)+5959548369623446683300*x+
47388386635309413180*x*sqrt(3121)+13496925785424777727250+106009783468206493150*sqrt(3121)))*x/
(468271835+36268045*sqrt(3121)+36402264*x)}
\psplot[linestyle=solid,linewidth=1pt,plotpoints=1500]{-3.110777}{1}{
(1/36)*(3549202749*x+403794459*x*sqrt(3121)+10770252205+834165035*sqrt(3121)
-sqrt(536203659176907980562*x^2+4006983259637287902*x^2*sqrt(3121) + 5959548369623446683300*x
+ 47388386635309413180*x*sqrt(3121)
+ 13496925785424777727250+106009783468206493150*sqrt(3121)))*x/(468271835
+ 36268045*sqrt(3121)+36402264*x)}
\psline[linestyle=solid,linewidth=0.5pt]{->}(-3.02,-0.2)(-3.02,0.7)
\psline[linestyle=solid,linewidth=0.5pt]{->}(-3.2,0.8)(-2.9,1.3)
\psline[linestyle=solid,linewidth=0.5pt]{->}(-2.9,1.8)(-2.3,1.6)
\psline[linestyle=solid,linewidth=0.5pt]{->}(-1.4,1.35)(-1,0.7)
\psline[linestyle=solid,linewidth=0.5pt]{->}(-0.4,0.5)(-0.2,0.05)
\psline[linestyle=solid,linewidth=0.5pt]{->}(-2.55,-1.35)(-2.65,-0.6)
\psline[linestyle=solid,linewidth=0.5pt]{->}(-1.7,-2)(-2.2,-1.45)
\psline[linestyle=solid,linewidth=0.5pt]{->}(-0.65,-1.39)(-1.2,-1.6)
\psline[linestyle=solid,linewidth=0.5pt]{->}(-0.15,-0.5)(-0.5,-0.7)
\psecurve[linewidth=0.5pt,plotpoints=500,arrowsize=3pt]{<-}(-2.1,0)(-2.,-0.1)(-1.8,0)
(-2.,0.25)(-2.25,0)(-2.,-0.25)(-1.5,-0.1)(-1.45,0)
\end{pspicture*}\\
$b>b^*(m)$&$b<b^*(m)$
\end{tabular}
\end{center}
\caption{\small Negative and positive invariant regions around the focus}\label{fig:lazos}
\end{figure}

The most difficult part of this approach consists in constructing these negative or positive
invariant regions delimited by loops. In \cite{GasGiaTor10}, the closed loop around the attracting
point to prove~\eqref{nonli} is proposed to be the loop of an algebraic self-intersecting curve
whose vertex is on the saddle point and whose branches approximate its separatrices. Here we use and
develop this idea when $m$ is small. One of the key points for proving that $b=5 m /7$ is a lower
bound of the saddle-node bifurcation curve is to consider a special rational parametrization of the
straight line $7b-5m=0,$ $m=m(s)$, $b=b(s)$; see \eqref{eq:parametrization}. With this
parametrization, the coordinates of both critical points on this line and the eigenvalues of the
saddle point are rational functions of $s.$ These facts diminish the computational difficulty
helping to prove the result.

When $m$ is large, we adapt the above approach constructing piecewise algebraic closed curves, which
also approximate the separatrices of the saddle point and takes into account the region where both
separatrices touch, for the first time, the negative $x$-axis.

The results that involve complicated and long algebraic manipulations are done both
with Mathematica and Maple.

The methods introduced in this work can also be useful to quantitatively study the unfolding of
other singularities. For instance, the cusp of codimension 3 (our case has codimension 2) considered
in~\cite{DumRouSot87}, the cases considered in~\cite{Dum77,DumFidLi01}, or the one studied by Takens
in~\cite{Tak1974b} and quoted in~\cite[p. 482]{Per} could be approached with our tools.

The paper is organized as follows. In Section~\ref{se:2}, changes of variables in
phase and parameter spaces are presented to shorten the
computations. The new results on the bifurcation curve, close to the
origin in the parameter space, are proved in Section~\ref{se:3} and
in Section~\ref{se:4} we prove the results for $m$ big enough.
Section~\ref{se:5} is devoted to prove Theorems~\ref{thm:n_grande}
and~\ref{thm:cota_global}. Finally, in Section~\ref{ap:perko},
Perko's formulation of the conjectures studied in this paper is
recalled and it is proved that they are equivalent to the ones
stated above.

\section{Changes of variables}\label{se:2}

As usual, to simplify the computations, we introduce changes of variables in both coordinates and
parameters. First, we move the saddle point of system~\eqref{sys:bt_original} to the origin. Second,
we change the parameters in such a way that the eigenvalues of the saddle point be rational
functions of the new parameters. We obtain in this way simpler expressions for the bifurcation
curves.

System~\eqref{sys:bt_original} is transformed by the change of variables $(x,y)\rightarrow (x+ m
,y)$ into
\begin{equation}\label{sys:bt_sqrt_nb}
\left\{{\begin{array}{l}
x'=y,\\
y'=2 m x+(b+m)y+x^2+xy.
\end{array}}\right.
\end{equation}
In order to obtain rational expressions for the eigenvalues
associated to the saddle point we introduce new parameters $M$ and
$B$ defined by
\begin{equation}\label{cv:nb_a_MB}
M^2=\frac{(b+ m )^2+8 m }{4},\quad B=\frac{b+ m }{2}.
\end{equation}
Then, system~\eqref{sys:bt_sqrt_nb} writes as
\begin{equation}\label{sys:bt_MB}
\left\{{\begin{array}{l}
x'=y,\\
y'=(M^2-B^2)x+2By+x^2+xy.
\end{array}}\right.
\end{equation}

The origin of~\eqref{sys:bt_MB} is a saddle point and the focus of \eqref{sys:bt_original},
$(-m,0)$, changes to $(B^2-M^2,0).$

In the parameter space the Hopf bifurcation curve $b= m $
becomes $B^2+2B-M^2=0.$ The condition of existence of an invariant
straight line $b= m -1$ is moved to $(B+1)^2-M^2=0$ and the lower
bound given in \cite{Hay04}, $b= m /2$, writes as $3B^2+8B-3M^2=0.$
Moreover, the origin goes to the origin and large values of $m$
also correspond with large values of $M.$

The homoclinic bifurcation curve $b=b^*(m)$ has a new expression
for system~\eqref{sys:bt_MB}. We denote\footnote{Although
numerically it seems that we can write it as $B=B^* (M)$ we have no
enough information on $b=b^*(m)$ to ensure this fact.} it by $W
(M,B)=0$. As it is located between the curves listed in the previous
paragraph, we have that $W (M,B)=0$ is contained in the set
\begin{equation}\label{eq:regionR}
\mathcal{R}=\left\{(M,B)\in\R^2:B>0,M\!>\!0,3B^2+8B\!>\!3M^2,B^2+2B\!<\!M^2,B\!>\!M-1\right\},
\end{equation}
see Figure~\ref{fig:cotasMB}.

Without loss of generality, we consider system~\eqref{sys:bt_MB}
only in the region $\mathcal{R}$. The known results about $b=b^*(m)$
can be translated to analogous results for system \eqref{sys:bt_MB}.

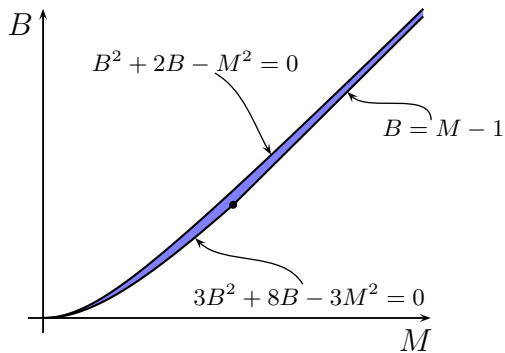
\begin{figure}[h]
\begin{center}
\psset{xunit=1.cm,yunit=1.cm,algebraic=true,arrowsize=3pt}
\begin{pspicture*}(-1,-0.5)(8.5,4.4)
\psline[linewidth=.7pt]{->}(-0.2,0)(5.1,0)
\psline[linewidth=.7pt]{->}(0,-0.2)(0,4.1)
\pscustom[fillstyle=solid,fillcolor=-green!50!red,linestyle=none]{
\psplot{5}{0}{sqrt(x^2+1)-1} \psplot{0}{2.5}{-4/3+(1/3)*sqrt(16+9*x^2)}
\psplot{2.5}{5}{x-1}}
\psplot[linestyle=solid, linewidth=.8pt,plotpoints=300]{0}{5}{sqrt(x^2+1)-1}
\psplot[linestyle=solid, linewidth=.8pt,plotpoints=300]{2.5}{5}{x-1}
\psplot[linestyle=solid, linewidth=.8pt,plotpoints=300]{0}{2.5}{-4/3+(1/3)*sqrt(16+9*x^2)}
\psdots[dotsize=3pt](2.5,1.5)
\rput(4.9,-0.3){$M$} \rput(-0.3,3.9){$B$}
\rput(2,3.4){\rnode{A}{\scriptsize{$B^2+2B-M^2=0$}}} \rput(3,2.163){\pnode{B}}
\rput(5.1,2.5){\rnode{C}{\scriptsize{$\quad B=M-1$}}} \rput(4,3){\pnode{D}}
\rput(5,4.01){\pnode{F}} \rput(3.5,0.3){\rnode{G}{\scriptsize{$3B^2+8B-3M^2=0$}}}
\rput(2,1.06){\pnode{H}} \nccurve[linewidth=0.5pt,angleA=-30,angleB=120]{->}{A}{B}
\nccurve[linewidth=0.5pt,angleA=90,angleB=-45]{->}{C}{D}
\nccurve[linewidth=0.5pt,angleA=120,angleB=45]{->}{E}{F}
\nccurve[linewidth=0.5pt,angleA=120,angleB=-45]{->}{G}{H}
\end{pspicture*}
\end{center}
\caption{\small Curves that define the region $\mathcal{R}$ (colored), that contains 
$W(M,B)=0$}\label{fig:cotasMB}
\end{figure}

\section{Bounds near the origin}\label{se:3}

This section is devoted to the study of lower and upper bounds for $b=b^*(m)$ for small values of
$m$. The first result proves the lower bound given in Perko's Conjecture~$\mathcal{II}$, by using a
suitable rational parametrization of the straight line $7b-5m=0.$ The second result provides a new
algebraic upper bound. In the $M,B$ parameters, this upper bound, $D(M,B)=0$ is given, in an
implicit way, by a polynomial $D$ of degree $14$. When we transform it into the $m,b$ variables, we
get an algebraic curve of degree $25$. In Theorem~\ref{thm:cota_global} we give a much simpler upper
bound, see the details in Section~\ref{se:5}.

Notice that the lines $7b-5m=0$ and $b=m-1$ intersect at $(m,b)=(7/2,5/2)$ and for $m>7$ we have
$m-1>5m/7$. Recall that it is known that $ m -1<b^*(m),$ see~\cite{Per92} or
Lemma~\ref{lem:rectainva}. Hence, in next result, we only consider $m\le 7/2.$

\begin{proposition}\label{pr:cubica}
For all $m\le 7/2$ it holds that $b^*(m)>5m/7$.
\end{proposition}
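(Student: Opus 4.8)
The plan is to prove that $b^*(m) > 5m/7$ for $m \le 7/2$ by showing the \emph{existence} of the limit cycle whenever $(m,b)$ lies on the line $7b - 5m = 0$ (equivalently $b = 5m/7$) with $0 < m \le 7/2$. Indeed, since the limit cycle exists precisely for $b^*(m) < b < m$ and disappears into the saddle-loop at $b = b^*(m)$, exhibiting a limit cycle at $b = 5m/7$ forces $b^*(m) < 5m/7$, which is the reverse of what we want — so I must be careful about orientation. The correct strategy, following the method described in the introduction and illustrated in Figure~\ref{fig:lazos}, is to construct a \textbf{negative} invariant region around the focus when $b = 5m/7$, thereby trapping the unstable limit cycle inside and proving it still exists at this parameter value; by the monotonicity of the rotated family this shows the saddle-loop has not yet occurred, i.e. $b^*(m) < 5m/7$ is impossible and hence $b^*(m) > 5m/7$ is what we must instead establish by showing the loop has \emph{already} closed, that is, no limit cycle survives. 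Concretely, I would construct a closed algebraic curve through the saddle whose loop is without contact for the flow, so that Green's theorem (the Bendixson--Dulac mechanism) rules out a periodic orbit in the enclosed region, forcing the saddle-loop connection to sit at a value $b^*(m)$ strictly above $5m/7$.

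First I would pass to the $(M,B)$ coordinates of system~\eqref{sys:bt_MB}, where the line $7b - 5m = 0$ becomes some algebraic curve in the $\mathcal{R}$-region and the saddle sits at the origin with rational eigenvalues. The crucial device flagged in the text is the special rational parametrization $m = m(s)$, $b = b(s)$ of the line $7b - 5m = 0$, to be introduced in~\eqref{eq:parametrization}, chosen so that the two critical points and the saddle eigenvalues all become rational functions of the single parameter $s$. I would substitute this parametrization so that the entire problem depends on one variable $s$ rather than on two parameters constrained to a line. Next I would propose an explicit algebraic curve $F(x,y;s) = 0$ — a self-intersecting conic-like or cubic curve, guided by the form appearing in~\eqref{nonli} and the curves drawn in Figure~\ref{fig:lazos} — whose double point is pinned at the saddle (the origin) and whose two branches approximate the saddle separatrices, enclosing a loop around the focus.

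The heart of the proof is then to verify the \emph{non-contact} condition: along the loop, the vector field of~\eqref{sys:bt_MB} must point consistently to one side of $F = 0$. This reduces to checking that $\langle \nabla F, (x', y')\rangle$, restricted to $F = 0$, has a definite sign; using the curve equation to eliminate one variable, this becomes a one-variable polynomial (in $x$, with coefficients rational in $s$) whose sign I must control. I would reduce this to showing that a certain explicit polynomial $P(x,s)$ does not vanish, or is of one sign, on the relevant range of $x$ for all admissible $s$. This is exactly the step I expect to be the \textbf{main obstacle}: the resulting polynomial inequality is likely to be of high degree and algebraically unwieldy, so I would establish it by computing a Sturm sequence or by a resultant/discriminant computation (the text explicitly notes that such long manipulations are carried out in Mathematica and Maple) to certify that $P$ has no real roots in the region of interest, or that its boundary values together with the absence of interior roots give the needed sign. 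The rationality of all data in $s$ is what keeps these resultants exact and tractable.

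Finally I would combine the non-contact estimate with the known uniqueness and hyperbolicity of the limit cycle (from~\cite{LiRouWan90}) and the Poincar\'e--Bendixson theorem: having a loop without contact around the focus shows the saddle separatrices have already formed the homoclinic connection by the time $b$ reaches $5m/7$, equivalently that for $b = 5m/7$ no limit cycle can exist. Since the limit cycle exists exactly on $b^*(m) < b < m$, its absence at $b = 5m/7$ places this value on the saddle-loop side, giving $b^*(m) \ge 5m/7$, and a strictness argument (the constructed loop is transverse, not tangent, except at the saddle) upgrades this to the strict inequality $b^*(m) > 5m/7$ for all $0 < m \le 7/2$. The restriction $m \le 7/2$ is natural here because, as noted before the statement, for $m > 7$ one has $m - 1 > 5m/7$ and the already-known bound $m - 1 < b^*(m)$ takes over, while the two lines meet at $(m,b) = (7/2, 5/2)$.
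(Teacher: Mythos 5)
Your overall plan does line up with the paper's proof: parametrize the line $7b-5m=0$ rationally as in \eqref{eq:parametrization} so that everything becomes a rational function of one variable $s$, build an algebraic curve with a double point at the saddle whose branches match the separatrices to high order, certify a transversality condition by resultants and Sturm sequences, and finish with Poincar\'e--Bendixson plus the uniqueness and hyperbolicity of the limit cycle. But the logical core of the argument --- which way the vector field must cross the loop, and why that kills the limit cycle --- is garbled in your write-up, and in places outright wrong. You propose ``a \emph{negative} invariant region around the focus\ldots thereby trapping the unstable limit cycle inside and proving it still exists,'' and then claim this ``shows $b^*(m)<5m/7$ is impossible.'' It shows exactly the opposite: a negatively invariant region around the (stable) focus traps \emph{backward} orbits in the annulus between a small circle around the focus and the loop, producing a periodic orbit there; that is precisely how one would prove the limit cycle \emph{exists}, i.e.\ $b^*(m)<5m/7$. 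What the proposition requires is the other picture in Figure~\ref{fig:lazos}: a \emph{positively} invariant region, with the field of \eqref{sys:bt_MB} pointing \emph{inward} all along the loop. Then the unstable separatrix enters and never leaves, the stable separatrix exits backward in time, so no homoclinic connection can occur at this parameter value, and the unique repelling cycle cannot exist either (were it inside the loop, forward orbits in the annulus between the cycle and the loop would be trapped with no admissible $\omega$-limit set). This yields $b=5m/7<b^*(m)$ strictly. Your appeal to ``Green's theorem (the Bendixson--Dulac mechanism)'' to rule out periodic orbits inside a without-contact loop is also not a valid mechanism: Bendixson--Dulac concerns the sign of the divergence of a rescaled field on a region (that is how Lemma~\ref{lem:rectainva} works), not transversality of a boundary curve. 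Since you never commit to the correct crossing direction and twice assert the inverted implication, I count this as a genuine gap rather than a presentational slip.

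Two further points of comparison with the paper's proof. First, the verification is more delicate than ``check that one polynomial has no roots'': one must establish (i) that the candidate curve actually has a loop lying in a strip $\widetilde x(s)<x<0$ (positivity of a discriminant $\Delta=x^2R(x,s)$ and non-vanishing of the leading coefficient in $y$), (ii) non-contact on that strip via $\res(U,\dot U,y)$, and (iii) the inward orientation at one point of the loop; each step is reduced to root-counting for explicit one-variable polynomials in $s$ via resultants and Sturm sequences, exactly in the spirit you describe. Second, a single algebraic curve does not suffice on the whole range: with the degree-$4$ curve ($N=1$) the non-contact condition fails near $s\approx 5.08$, so the paper switches to a degree-$6$ curve ($N=2$) for $s\in[5,7)$. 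Your one-curve plan would run into this obstruction; it is repairable, but worth flagging.
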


\begin{proof}
Recall that system~\eqref{sys:bt_MB} has a saddle point at the origin and the linear
appro\-xi\-ma\-tion of its separatrices is given by the equation
\begin{equation}\label{eq:eqdelloop_c2}
C_2(x,y):=(y-(B+M)x)(y-(B-M)x)=0,
\end{equation}
which defines two straight lines. The one with slope $B+M$ (resp. $B-M$) is tangent at the origin
to the unstable (resp. stable) separatrix.

The curve $7b-5 m =0$, for $m,b>0$, can be rationally parametrized as
\begin{equation}\label{eq:parametrization}
(m(s),b(s))=\left(\frac{7s^2}{2(6s+7)},\frac{5s^2}{2(6s+7)}\right),
\end{equation}
with $s>0.$ From the above expression we have that $M(s)=(7s+3s^2)/(6s+7)$ and $B(s)=3s^2/(6s+7)$.
Hence, with this parametrization, the slopes of the separatrices are rational functions of $s$.

For $N\in\{1,2\}$, let us consider an algebraic curve of degree $2(N+1)$ of the form
\begin{equation}\label{eq:eqdelloop_cub}
C(x,y):=C_2(x,y)+\sum\limits_{k=3}^{2(N+1)}\left(c_{k,0}\,x^k+c_{k-1,1}\,x^{k-1}y+c_{k-2,2}\,x^{k-2}
y^2\right)=0,
\end{equation}
to be determined. Notice that the above curve is quadratic in $y$.
Following the method described in \cite{GasGiaTor10}, we impose that
this expression defines a curve as close as possible to the
separatrices. It means that it should coincide at the origin
with the separatrices up to the highest possible derivative orders.
For this purpose, first we evaluate the Taylor series expansions of
both separatrices close to the origin. We express the separatrices
as functions of $x$,
\begin{equation}\label{eq:taylorsepa}
y=\Phi^{\pm}(x):=\sum\limits_{k=1}^{\infty}a_k^{\pm}x^k,
\end{equation}
where the superscript sign determines the separatrix that we approach in each
case. Then $a_k^{\pm}$ are real numbers obtained from the identity
\begin{equation*}
\left.\frac{\partial(y-\Phi^{\pm}(x))}{\partial x}y+\frac{\partial(y-\Phi^{\pm}(x))}
{\partial y}\left(\frac{7s^2}{6s+7}\,x+\frac{6s^2}{6s+7}\,y +
x^2+xy\right)\right|_{y=\Phi^{\pm}(x)}\equiv0.
\end{equation*}
Straightforward computations show that the first values of $a_k^{\pm}$ are
\begin{equation*}
\begin{aligned}
 &a_1^{+}=s, & a_1^{-}&= \,{\frac {-7s}{6\,s+7}},\\
 &a_2^{+}={\frac { \left( s+1 \right) \left( 6\,s+7 \right) }{3s \left( 4\,
s+7 \right) }}, & a_2^{-}&= {\frac {s-7}{3s \left( 2\,s+7 \right) }},\\
 &a_3^{+}=-{\frac { \left( s+1 \right) \left( 6\,s+7 \right) ^{2} \left(
5\,s+14 \right) }{18{s}^{3} \left( 4\,s+7 \right) ^{2} \left( 9\,s+14
 \right) }}, & a_3^{-}&= {\frac { \left( 7-s \right) \left( 6\,s+7 \right) ^{2} \left(
s+2 \right) }{18{s}^{3} \left( 2\,s+7 \right) ^{2} \left( 3\,s+14
 \right) }}.\\
\end{aligned}
\end{equation*}

In fact all the coefficients of $\Phi^{\pm}$ have rational expressions depending on
$s$ with non vanishing denominators when $s>0.$

Substituting \eqref{eq:taylorsepa} in \eqref{eq:eqdelloop_cub} we get
\begin{equation}\label{eq:G}
G^{\pm}(x):=C(x,\Phi^{\pm}(x))=\sum\limits_{k=3}^{\infty}g_k^{\pm}x^k.
\end{equation}
From definition \eqref{eq:eqdelloop_c2}, $g_k^{\pm}$ vanish for
$k=1,2$. Imposing that the curve defined by $C(x,y)=0$ in
\eqref{eq:eqdelloop_cub} becomes closer to the separatrices provides
extra conditions $g_k^{\pm}=0$ for higher values of $k$. We have $6N$
free coefficients $c_{i,j}$. We can fix these coefficients
imposing that $g_k^{\pm}=0,$ for $k=3,4,\ldots, 3N+2$ and solving the
system. We obtain that all the $c_{i,j}$ are rational functions of
$s$, which are well defined for $s>0.$ We call $U(x,y,s)$
the numerator of $C(x,y)$. We have
\begin{equation}\label{eq:eqdelloop_cub nova}
U(x,y,s)=T_2(x,s)y^2+T_1(x,s)y+T_0(x,s),
\end{equation}
where $T_0,T_1$ and $T_2$ are polynomials.

The proof continues showing that, for a given set of values of $s$, the algebraic curve $U(x,y,s)=0$
defines a positive invariant closed region that contains the focus point; see the right picture in
Figure~\ref{fig:lazos}. This assertion follows if we prove:
\begin{enumerate}[(I)]
\item The curve
$U(x,y,s)=0$ has a loop, as it is shown in Figure~\ref{fig:lazos}, included in the strip $\widetilde
x (s)<x<0$ for a given negative value $\widetilde x (s).$
\item This curve is without contact for the vector field on this strip.
\item The vector field points in on the loop.
\end{enumerate}

Notice that if we prove these properties for $s\in(0,7]$ we ensure that the straight line $7b-5m=0$
is a lower bound of $b=b^*(m)$ for $m\in(0,7/2]$.

Taking the curve $U(x,y,s)=0$ corresponding to $N=1,$ we can only prove the above assertion when
$s\in(0,5].$ When $s\in[5,7)$ we need to consider $N=2$. We will detail only the proof for $N=1$.
For the case $N=2$, we will describe only the differences between the two cases.

So, let us prove (I)-(III) taking $N=1$ and $s\in (0,5].$

Proof of (I). Notice that the curve $U(x,y,s)=0$ can be
written as
\[
y=\frac{-T_1(x,s)\pm\sqrt{\Delta(x,s)}}{2T_2(x,s)},
\]
where $\Delta:=T_1^2-4T_2T_0.$ Straightforward computations show that
\begin{equation}\label{eq:R4}
\Delta(x,s)=x^2\,R_4(x,s),
\end{equation}
where $R_4$ is a polynomial of degree $4$ in $x$ and of degree $30$
in $s.$ Hence, item~(I) will follow if we prove that there is a
negative value $\widetilde x (s)$ such that:
\begin{enumerate}[(i)]
\item $R_4(\widetilde x (s),s)=0$ and $R_4(x,s)>0$ in $(\widetilde x (s),0).$
\item $T_2(x,s)\ne0$ in $[\widetilde x (s),0).$
\end{enumerate}

We start proving (i). For each $s$, the coefficients $r_0(s)$ and $r_4(s)$ of minimum and
maximum degree of $R_4$ in $x$ are both positive, it has exactly
two simple negative zeros and it has no double zeros. The
non-existence of double zeros is due to the fact that
\[
\widetilde{R}_4(s)=\res(R_4(x,s),\partial R_4(x,s)/\partial x,x) \ne0,\quad s\in(0,5],
\]
where $\res(\cdot,\cdot,x)$ denotes the resultant with respect to $x;$ see for
instance~\cite{Kostr}. These properties follow studying the roots of $r_0, r_4$ and
$\widetilde{R}_4,$ that are polynomials with rational coefficients with respective degrees $30$,
$26$ and $190.$ Their Sturm sequences ensure that they have no positive roots for all
$s\in(0,5]$. We take $\widetilde x (s)$ to be the maximum of the negative zeros of $R_4(x,s).$

Let us prove (ii). The polynomial $T_2(x,s)$ has degrees $2$ and $14$ in $x$ and $s$, respectively.
Straightforward computations, using its Sturm sequence, show that the discriminant with respect to
$x$ of $T_2$ is a polynomial with rational coefficients of degree $26$ in $s$ without positive
zeros. Hence, it does not vanish when $s\in (0,5]$. It is easy to see that $T_2(0,s)\ne0$ when $s>0$
because it is a polynomial of degree $26$ with positive coefficients. For proving that
$T_2(\widetilde x (s),s)\ne0$ for $s\in(0,5]$ first we show that $\res(R_4(x,s),T_2(x,s),x)\ne0$ for
every $s$ in this interval. In fact, it is a polynomial of degree $106$ in $s$ with no real roots in
$s\in(0,5]$. Therefore, for these values of $s$, the number of real roots of $T_2(x,s)$ in
$[\widetilde x (s),0)$ does not depend of~$s$. Studying for instance the case $s=1$ we can easily
verify that $T_2(x,1)$ has no real roots in $[\widetilde x (1),0)$ and then we have the desired
result.

Proof of (II). We have to show that the vector field
\eqref{sys:bt_MB} is never tangent to $U(x,y,s)=0.$ To
prove this fact we study the common zeros between $U$
and its derivative with respect to the vector field
\begin{equation*}
\dot {U}(x,y,s):=\frac{\partial U(x,y,s)}{\partial x}y+\frac{\partial U(x,y,s)} {\partial
y}\left(\frac{7s^2}{6s+7}\,x+\frac{6s^2}{6s+7}\,y + x^2+xy\right).
\end{equation*}
We get
\begin{equation}\label{eq:S4}
\res(U(x,y,s),\dot{U}(x,y,s),y)=x^{12}S_{4}(x,s),
\end{equation}
where $S_{4}(x,s)$ is a polynomial of degree $4$ in $x$ and of
degree $65$ in $s$. We want to prove that $S_4$ does not change sign
for $x\in[\widetilde x (s),0)$ and $s\in(0,5]$.

For a given value of $s,$ say $s=1$, the result follows directly from the Sturm method. After that,
we proceed like in the proof of (I). We compute
\[
\res(R_4(x,s),S_4(x,s),x)\quad\mbox{and}\quad
 \res(S_4(x,s),\partial S_4(x,s)/\partial x,x).
\]
We obtain two polynomials in $s$ of degrees $362$ and $438$, respectively. In the interval $(0,5],$
the former polynomial does not vanish and the latter has six different real roots. Then, as can be
seen with the Sturm method, in each of the seven intervals defined by these roots the relative
position of the maximum negative zero of $R_4(x,s)$ with respect to the negatives zeros of
$S_4(x,s)$ does not change. Choosing $s$ in each interval we can check that there are no zeros
of $S_4$ in $[\widetilde x (s),0)$ when $s\in(0,5],$ as we wanted to prove. Moreover, $S_4(0,s)$
does not vanish on $s\in(0,5].$ Then the curve $U(x,y,s)=0$ is without contact in the region where
the loop is defined. 

We remark that this step is the one that does not work in the whole interval $(0,7)$, taking $N=1$,
because $S_4(0,s)$ has a zero close to $5.08.$

Proof of (III). We finish the case $N=1$ checking that the vector field points in at the
intersection point $(x_0(s),0),$ of the loop contained in the curve $U(x,y,s)=0$ with the $x$-axis.
We prove that $\dot{U}(x_0(s),0,s)$, $\frac{\partial U}{\partial x}(x_0(s),0,s)$ and $\frac{\partial
U}{\partial y}(x_0(s),0,s)$ are all negative for $s\in (0,5]$, where $x_0(s)$ is the biggest
negative zero of $U(x,0,s).$ As in the above items, it is enough to check the inequalities for a
concrete value of $s$. This holds because, straightforward computations show that $\res
(U(x,0,s)/x^2,\dot {U}(x,0,s)/x^2,x),$ which is a polynomial of degree $69$ in $s$, does not vanish
in $(0,5],$ as can be verified with the Sturm method.

As we have already said, the proof finishes taking $N=2$ in \eqref{eq:eqdelloop_cub} and following
the above procedure for $s\in[5,7)$. All the qualitative properties remain unchanged. In fact,
expressions \eqref{eq:R4} and \eqref{eq:S4} change to $\Delta(x,s)\!=\!x^2R_8(x,s)$
and $\res(U(x,y,s),\dot{U}(x,y,s),y)=x^{18}S_{8}(x,s),$ respectively. Here $R_8$ and $S_8$ are now
polynomials of degree $8$ in $x$ and $98$ and $229$ in $s$, respectively. 
\end{proof}

\begin{proposition}\label{prop:M_pequeno}
For all $\,\,0<M\le30,$ the graph of $W (M,B)=0$ is below the graph
of the function defined by the branch of the algebraic curve
$D(M,B)=0$, that writes as
\begin{align*}
&\!-\!{}383292 M^{14}\!-\!{}1910439 M^{13}B\!-\!{}3223665 M^{12}B^{2}\!-\!{}314748
M^{11}B^{3}\!+\!{}5603940 M^{10}B^{4}\\
&\!+\!{}5541141 M^{9}B^{5}\!-\!{}2323401 M^{8}B^{6}\!-\!{}7154664 M^{7}B^{7}\!-\!{}3397092
M^{6}B^{8}\!+\!{}2020587 M^{5}B^{9}\\
&\!+\!{}3218997 M^{4}B^{10}\!+\!{}1742052 M^{3}B^{11}\!+\!{}499644 M^{2}B^{12}\!+\!{}76071
MB^{13}\!+\!{}4869 B^{14}\\
&\!-\!{}500742 M^{13}\!-\!{}787023 M^{12}B\!+\!{}4493070 M^{11}B^{2}\!+\!{}14795091
M^{10}B^{3}\!+\!{}11566572 M^{9}B^{4}\\
&\!-\!{}11585754 M^{8}B^{5}\!-\!{}24443044 M^{7}B^{6}\!-\!{}8307134 M^{6}B^{7}\!+\!{}12772706
M^{5}B^{8}\\
&\!+\!{}16289545 M^{4}B^{9}\!+\!{}8662166 M^{3}B^{10}\!+\!{}2509195 M^{2}B^{11}\!+\!{}388536
MB^{12}\!+\!{}25344 B^{13}\\
&\!-\!{}174798 M^{12}\!+\!{}1420524 M^{11}B\!+\!{}7005177 M^{10}B^{2}\!+\!{}4483350
M^{9}B^{3}\!-\!{}16943919 M^{8}B^{4}\\
&\!-\!{}28501282 M^{7}B^{5}\!-\!{}3691132 M^{6}B^{6}\!+\!{}27741570 M^{5}B^{7}\!+\!{}31479694
M^{4}B^{8}\!+\!{}16742014 M^{3}B^{9}\\
&\!+\!{}4938651 M^{2}B^{10}\!+\!{}781536 MB^{11}\!+\!{}52119 B^{12}\!-\!{}48600 M^{11}\!+\!{}855846
M^{10}B\!+\!{}404136 M^{9}B^{2}\\
&\!-\!{}8473533 M^{8}B^{3}\!-\!{}14178838 M^{7}B^{4}\!+\!{}2903273 M^{6}B^{5}\!+\!{}26313718
M^{5}B^{6}\!+\!{}28894211 M^{4}B^{7}\\
&\!+\!{}15714446 M^{3}B^{8}\!+\!{}4769205 M^{2}B^{9}\!+\!{}775554 MB^{10}\!+\!{}53046
B^{11}\!+\!{}226800 M^{9}B\\
&\!-\!{}1138914 M^{8}B^{2}\!-\!{}2990748 M^{7}B^{3}\!+\!{}2116351 M^{6}B^{4}\!+\!{}10975549
M^{5}B^{5}\\
&\!+\!{}12542602 M^{4}B^{6}\!+\!{}7156446 M^{3}B^{7}\!+\!{}2261723 M^{2}B^{8}\!+\!{}380289
MB^{9}\!+\!{}26766 B^{10}\\
&\!-\!{}264600 M^{7}B^{2}\!+\!{}218442 M^{6}B^{3}\!+\!{}1575182 M^{5}B^{4}\!+\!{}2042992
M^{4}B^{5}\!+\!{}1262428 M^{3}B^{6}\\
&\!+\!{}421554 M^{2}B^{7}\!+\!{}73806 MB^{8}\!+\!{}5364 B^{9}=0,
\end{align*}
and whose series expansion at $M=0$ is
\begin{equation*}
B=\frac{3}{7} M^{2}-\frac {180}{2401} M^{4}+\frac{2366307}{90589730} M^{6}+O(M^7).
\end{equation*}
\end{proposition}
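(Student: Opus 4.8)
The strategy for Proposition~\ref{prop:M_pequeno} is to show that the algebraic curve $D(M,B)=0$ provides an \emph{upper} bound for the homoclinic curve $W(M,B)=0$, by exhibiting a \emph{negative} invariant region around the focus whenever $(M,B)$ lies below (or on) the branch of $D=0$. This is the mirror image of the construction in Proposition~\ref{pr:cubica}: there we built a positively invariant loop to certify nonexistence of the limit cycle and obtain a lower bound, while here I would build a negatively invariant loop to certify \emph{existence} of the limit cycle and thereby push the saddle-loop curve below $D=0$. Concretely, for each admissible $(M,B)$ on the curve $D(M,B)=0$, I would produce an algebraic curve $U(x,y)=0$ of the same shape as~\eqref{eq:eqdelloop_cub} (quadratic in $y$, with the quadratic part fixed to $C_2$ from~\eqref{eq:eqdelloop_c2}) whose free coefficients are chosen so that the curve osculates the two separatrices of the saddle to the highest possible order, and whose loop is now without contact in the \emph{opposite} sense, with the flow crossing outward. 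By the Poincar\'e--Bendixson theorem together with the uniqueness and hyperbolicity of the limit cycle (recalled in Section~\ref{se:1}), the existence of such an outward-crossing loop surrounding the focus forces the parameters to satisfy $b>b^*(m)$, i.e. the point lies strictly above the homoclinic curve in the $(M,B)$ region, which is exactly the claim that $W=0$ lies below $D=0$.

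First I would set up the osculation system exactly as in the proof of Proposition~\ref{pr:cubica}: expand both separatrices as $y=\Phi^\pm(x)=\sum_{k\ge1}a_k^\pm x^k$, substitute into the candidate curve to obtain $G^\pm(x)=\sum_{k\ge3}g_k^\pm x^k$, and solve $g_k^\pm=0$ up to the highest feasible order for the available free coefficients $c_{i,j}$. The crucial difference from Proposition~\ref{pr:cubica} is that here there is no clean one-parameter rational parametrization of $D=0$ trivializing the algebra, so I would instead work with $M,B$ as independent parameters and \emph{define} the bounding curve $D(M,B)=0$ as the locus on which the osculating loop degenerates—that is, the discriminant condition under which the loop just ceases to be without contact, or equivalently where the relevant resultant's critical root collides with the loop. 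The degree-$14$ polynomial $D$ is then read off as the resultant or discriminant emerging from this degeneration condition, and the series expansion $B=\tfrac37M^2-\tfrac{180}{2401}M^4+\tfrac{2366307}{90589730}M^6+O(M^7)$ serves both as a sanity check (its leading term matches the Melnikov value $b^*\sim 5m/7$, since $3B^2+8B-3M^2=0$ linearizes to $B\approx \tfrac38 M^2$ near the origin and the focus line translates accordingly) and as verification that the chosen branch of $D=0$ is the correct one lying inside $\mathcal{R}$.

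Next I would verify the three loop properties (I)--(III) analogous to Proposition~\ref{pr:cubica}: that $\Delta=T_1^2-4T_2T_0$ factors as $x^2 R(x,M,B)$ with $R$ having exactly the right sign and root structure to cut out a loop in a strip $\widetilde x(M,B)<x<0$; that $T_2$ does not vanish on the relevant interval; that the curve is without contact for the flow, checked via $\res(U,\dot U,y)=x^{j}S(x,M,B)$ with $S$ not changing sign on the loop; and that the flow crosses the loop outward at its intersection with the $x$-axis (here the signs of $\dot U$, $U_x$, $U_y$ must be \emph{opposite} to those in Proposition~\ref{pr:cubica}). Because $M,B$ now range over a genuine two-dimensional region rather than a curve, the resultant and Sturm-sequence arguments must be carried out in two parameters: I would fix the parameter combination along $D=0$ (reducing to one effective parameter) and then, exactly as before, use resultants in $x$ together with discriminants in the remaining parameter to show that the root-ordering is constant on each subinterval, finally checking one representative value per subinterval by the Sturm method.

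\textbf{The main obstacle.} The hard part will be the two-parameter bookkeeping in the without-contact step (II), since verifying the sign of $S$ on the moving loop $[\widetilde x(M,B),0)$ as $(M,B)$ sweeps $0<M\le 30$ cannot be reduced to a single rational parametrization the way the line $7b-5m=0$ was in Proposition~\ref{pr:cubica}. I expect that constraining to the curve $D=0$ (which is precisely engineered to make the loop tangent-free on exactly this range) keeps the relevant resultants from vanishing, but controlling their real roots over the long interval $0<M\le30$ with the Sturm method—on polynomials whose degrees in the parameter will be in the hundreds—is where the genuine computational difficulty lies, and is presumably why the bound is stated only up to $M\le30$ rather than for all $M$; the large-$M$ regime is instead handled by the separate piecewise-algebraic construction of Section~\ref{se:4}.
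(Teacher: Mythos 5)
Your overall frame is the right one and matches the paper's general method: an upper bound for $W(M,B)=0$ is certified by producing, for parameters on the candidate curve, a loop around the focus across which the flow points outward, so that Poincar\'e--Bendixson (together with uniqueness and hyperbolicity of the limit cycle) forces the limit cycle to exist there and hence $b>b^*(m)$. The genuine gap is in how you propose to obtain the specific degree-$14$ polynomial $D$. The paper does \emph{not} get $D$ as a discriminant or resultant expressing a ``degeneration of the without-contact property'' of the loop. It takes the curve $C(x,y)=0$ of \eqref{eq:eqdelloop_cub} with $N=1$ (degree four, six free coefficients), matches the separatrices \emph{asymmetrically} --- solving $g_k^+=0$ for $k=3,4,5,6$ together with $g_3^-=g_4^-=0$, so the unstable separatrix is approximated to two orders higher than the stable one --- and then defines $D(M,B)=0$ as the numerator of the next one-sided local condition $g_7^+(M,B)=0$, following the strategy of \cite{GasGiaTor10}: the sign of the first unmatched coefficient decides on which side of the true separatrix the algebraic branch lies near the saddle, hence the direction in which the flow crosses the loop, and the bound curve is the locus where this leading obstruction vanishes. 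Your proposed global tangency-degeneration locus is a different condition and would generically produce a different polynomial, so your construction would not yield the curve stated in the proposition nor its expansion $B=\tfrac37M^2-\tfrac{180}{2401}M^4+\cdots$ (whose leading terms are what reproduce \eqref{nonli}; note also that the relevant comparison is with $b=5m/7$, which transforms to $B\approx\tfrac37M^2$, not with the Hayashi curve $3B^2+8B-3M^2=0$ as you suggest).

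A smaller but related point: you describe the osculation as symmetric ``to the highest possible order'' and leave the degree of the auxiliary curve open. With symmetric matching at degree four you would simply reproduce the setup of Proposition~\ref{pr:cubica} and there would be no leftover one-sided coefficient $g_7^+$ whose vanishing defines $D$; the asymmetric allocation of the six coefficients is thus not cosmetic but is precisely the mechanism that produces the stated curve. Your remarks on the two-parameter resultant/Sturm bookkeeping needed to verify the loop properties along $D=0$ for $0<M\le 30$ are reasonable and consistent with what the paper (very tersely) asserts must be checked.
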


\begin{proof}
It follows using the same ideas and techniques developed in the proof of
Proposition~\ref{pr:cubica} but with bigger computational difficulties. We only
comment the main differences avoiding the details.

Here it is sufficient to consider a curve $C(x,y)=0$ like in
\eqref{eq:eqdelloop_cub} of degree four. Nevertheless, its
coefficients are obtained in a different way. We solve the system
given by $g_k^+=0, k=3,4,5,6$ and $g_3^-=g_4^-=0.$ It means that we
match the separatrices with different orders. Finally, using the same
notation that in the proof of Proposition~\ref{pr:cubica}, the
expression of $D(M,B)=0$ is obtained imposing the condition
$g^+_7(M,B)=0,$ following the same strategy as in~\cite{GasGiaTor10}.
\end{proof}

\begin{remark}\label{new}
In the $m,b$ variables the expression of $D(M,B)=0$ provided in
the latter result is transformed into a new algebraic curve of
degree $25$ with $257$ monomials.
\end{remark}

\section{Bounds up to infinity}\label{se:4}

For the sake of completeness we also include a proof of the inequality $b^*(m)>m-1,$ different to
the one given in \cite{Per92}. It is based on the
Bendisxon-Dulac criterion, and it is quite simple.

\begin{lemma}\label{lem:rectainva}
For all $m>0$, it holds that $b^*(m)>m-1.$
\end{lemma}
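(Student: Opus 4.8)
The plan is to exploit the fact, already recorded in Section~\ref{se:2}, that the line $b=m-1$ is exactly where system~\eqref{sys:bt_original} acquires an invariant straight line, and to turn that line into a Dulac function so that the Bendixson--Dulac criterion forbids limit cycles there.

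First I would locate the invariant line explicitly. Setting $b=m-1$ in \eqref{sys:bt_original} and searching for an invariant line through the saddle $(m,0)$ with slope $-1$, a direct substitution shows that $\ell(x,y):=x+y-m=0$ is invariant, with polynomial cofactor $K$ defined by $\dot\ell=\ell\,K$; one computes $K(x,y)=x+m$ in this case. I would also linearize at $(m,0)$: the eigenvalues are $B\pm M$ with slopes $B\pm M$, and for $b=m-1$ the stable slope $B-M$ equals $-1$, so $\ell=0$ is precisely the stable separatrix of the saddle.

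Next I would take the Dulac function $V:=1/\ell=1/(x+y-m)$ on the simply connected half-plane $\Omega:=\{x+y<m\}$, which contains the focus $(-m,0)$ since there $x+y-m=-2m<0$. Using $\diver F=b+x$ and $\dot V=-VK$, a one-line computation gives
\[
\diver(VF)=V\,(\diver F-K)=V\bigl((b+x)-(x+m)\bigr)=(b-m)\,V=-\frac{1}{x+y-m},
\]
which is strictly positive throughout $\Omega$ when $b=m-1$. By index theory any limit cycle must enclose critical points of total index $+1$; since the only critical points are the saddle (index $-1$) and the focus (index $+1$), a cycle must surround the focus alone, and being unable to cross the invariant curve $\ell=0$ it would lie entirely in $\Omega$, contradicting Bendixson--Dulac. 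Hence \eqref{sys:bt_original} has no limit cycle when $b=m-1$, so $m-1\notin(b^*(m),m)$, and since $m-1<m$ this forces $m-1\le b^*(m)$.

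Finally, for the strict inequality I would note that $b=m-1$ cannot be the saddle-loop value: on this line the stable separatrix coincides with the whole unbounded line $\ell=0$ and escapes to infinity, so it cannot return to the saddle to form a homoclinic loop. Thus $b^*(m)\neq m-1$, which together with $m-1\le b^*(m)$ yields $b^*(m)>m-1$. The computations are short and I expect no serious obstacle; the only points requiring a word of care are the index argument confining the cycle to $\Omega$ and the verification that $\ell=0$ is genuinely the separatrix, so that the strict inequality step is legitimate.
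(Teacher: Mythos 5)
Your proposal is correct and is essentially the paper's own proof: both use the invariant line $\ell=\{x+y-m=0\}$ at $b=m-1$ and the Dulac function $1/(x+y-m)$, arriving at the same identity $\diver(VF)=-1/(x+y-m)$ and concluding via Bendixson--Dulac. The only differences are cosmetic: the paper simply applies the criterion to each half-plane rather than invoking index theory to confine the cycle to one of them, and you are somewhat more explicit than the paper about why the strict inequality holds (no saddle loop can form since the stable separatrix is the unbounded invariant line).
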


\begin{proof}
It suffices to prove that, when $b= m -1$, system \eqref{sys:bt_original} has no limit cycles. It
writes as
\begin{equation*}
\left\{{\begin{array}{l}
x'=y:=P(x,y),\\
y'=-m^2+(m-1)y+x^2+xy:=Q(x,y),
\end{array}}\right.
\end{equation*}
and it has the invariant straight line $\ell:=\{x+y- m =0\}.$
Therefore the limit cycles, if they exist, are contained in
$\mathcal{U}:=\mathbb{R}^2\setminus\ell.$ We have that on
$\mathcal{U},$
\begin{equation*}
\diver\left(\frac {P(x,y)}{L(x,y)},\frac{Q(x,y)}{L(x,y)}\right)
=\frac{\partial}{\partial x}\left(\frac{P(x,y)}{L(x,y)}\right)+
\frac{\partial}{\partial y}\left(\frac{Q(x,y)}{L(x,y)}\right)=
\frac{-1}{L(x,y)}\ne0,
\end{equation*}
where $L(x,y)=x+y-m.$ Therefore, we can apply the well-known Bendixon-Dulac criterion
(\cite{Per}) to each one of the half planes of $\mathcal{U}$,
obtaining that the system has no limit cycles, as we wanted to
prove.
\end{proof}

As a first step to obtain the upper bound of $b^*(m)$ for $m\ge7$
given in Theorem~\ref{thm:cota_global} we study the curve $W (M,B)=0$ on regions $\{(M,B)\,:\,
M>M_{\alpha}\}$ that arrive to infinity. The procedure is similar to the one
described in Section \ref{se:3}, but in this case only upper bounds will be provided
because we only have been able to obtain negatively invariant regions. As we will see, these bounds
will be enough to prove Perko's Conjecture~$\mathcal{I}$.

Suitable negatively invariant regions are much more difficult to
be found that in the previous section. We will construct them using piecewise algebraic curves;
see Figure~\ref{fig:Ftrozos}.

\begin{figure}[h]
\begin{center}
\psset{xunit=0.8cm,yunit=0.8cm,algebraic=true,arrowsize=3pt}
\begin{pspicture*}(-3.5,-3)(0.5,2)
\psline[linewidth=.7pt]{->}(-3.5,0)(0.5,0)
\psline[linewidth=.7pt]{->}(0,-3)(0,2)
\psdots[dotsize=3pt](0,0)(-2,0)
\psdots[dotsize=5pt,dotstyle=+,dotangle=45](-2.99,0)
\psdots[dotsize=5pt,dotstyle=+](-2,1.56)
\psplot[linestyle=solid,linewidth=1.3pt,plotpoints=1500]{-3}{0}{
2/9*1/(8256+141*sqrt(226)+266*x)*((11008*x+188*x*sqrt(226)+33024+564*sqrt(226)+sqrt(158811104*x^2 +
4645339*x^2*sqrt(226)+1844762403*x+57783633*x*sqrt(226)+4104987273+131542848*sqrt(226)))*x)}
\psplot[linestyle=solid,linewidth=1.3pt,plotpoints=1500]{-3}{-2}{
4/3-(1/6)*sqrt(226)+(4/9)*x-(1/18)*x*sqrt(226)+(1/18)*sqrt(-39150+2160*sqrt(226) - 43500*x+2400*x
* sqrt(226)-10150*x^2+560*x^2*sqrt(226))}
\psplot[linestyle=solid,linewidth=1.3pt,plotpoints=1500]{-2}{0}{(8/9)*x-(1/9)*x*sqrt(226)}
\end{pspicture*}
\end{center}
\caption{\small Piecewise loop for obtaining a negatively invariant
region}\label{fig:Ftrozos}
\end{figure}
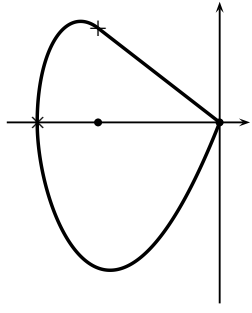

\begin{proposition}\label{prop:M_grande}
For every real $\alpha>0$, there exists $M_{\alpha}>\sqrt{\alpha}>0$ such that
\[
\{W
(M,B)=0\}\subset\left\{M-1<B<M-1+\frac{\alpha}{M^2}\right\}\cap\{M>M_\alpha\}.
\]
\end{proposition}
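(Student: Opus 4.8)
The lower bound $M-1<B$ along $\{W(M,B)=0\}$ is already available: in the $(M,B)$ coordinates the line $b=m-1$ becomes $(B+1)^2-M^2=0$, i.e.\ $B=M-1$, and by construction $\{W(M,B)=0\}$ lies in the region $\mathcal R$ of \eqref{eq:regionR}, where $B>M-1$; this is exactly the translation of Lemma~\ref{lem:rectainva}. Hence the whole content of the statement is the upper bound. The plan is to prove it by the invariant-region method of Section~\ref{se:3}, but now producing \emph{negatively} invariant regions. Concretely, I would fix $\alpha>0$, set $B_0:=M-1+\alpha/M^2$, and show that for all sufficiently large $M$ system~\eqref{sys:bt_MB} with $B=B_0$ possesses a limit cycle. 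Since $B_0<\sqrt{M^2+1}-1$ once $\alpha/M^2<\sqrt{M^2+1}-M\sim 1/(2M)$, the point $(M,B_0)$ lies strictly below the Hopf curve $B^2+2B=M^2$; combined with the existence of the limit cycle and the monotone birth/death of the cycle recalled in Section~\ref{se:1}, this places $(M,B_0)$ in the band where the cycle is present, hence strictly above the saddle-loop locus, giving $B<B_0=M-1+\alpha/M^2$ on $\{W(M,B)=0\}$.

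To exhibit the limit cycle I would build a closed loop $\gamma$ surrounding the focus $(B^2-M^2,0)$ on which the flow of \eqref{sys:bt_MB} points strictly outward, so that the region it bounds is negatively invariant; since the focus is the only equilibrium inside and is a sink (we are below the Hopf curve), reversing time makes it a source inside a positively invariant region, and Poincar\'e--Bendixson then forces a periodic orbit. As in Figure~\ref{fig:Ftrozos}, $\gamma$ is assembled from three arcs: (a) an algebraic arc attached at the saddle (the origin) whose two branches approximate the stable and unstable separatrices, obtained as the curve $C(x,y)$ of \eqref{eq:eqdelloop_cub} by matching the Taylor expansions \eqref{eq:taylorsepa}; (b) a middle conic-type arc carrying the loop across the negative $x$-axis in the region where both separatrices first meet it; and (c) a straight segment returning to the saddle along the weakly stable direction, whose slope is the eigenvalue $B-M$ appearing in \eqref{eq:eqdelloop_c2}. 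The pieces must join continuously at the two corners marked in Figure~\ref{fig:Ftrozos}, and each arc must be without contact.

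The verification then splits, as in Proposition~\ref{pr:cubica}, into $(\mathrm I)$ $\gamma$ is a genuine loop around the focus inside the relevant strip; $(\mathrm{II})$ each arc is without contact, which reduces to showing that the analogue of $\dot U$ keeps a constant, outward-pointing sign along the arc, controlled through resultants in $y$ and Sturm sequences as in \eqref{eq:S4}; and $(\mathrm{III})$ at the two corners the incident fields both point outward, so the corners preserve transversality. The new feature is that all of this must hold \emph{uniformly for large $M$} and with the prescribed gap $\alpha/M^2$. Because the focus escapes to $-\infty$ (its abscissa $B^2-M^2\sim -2M$), the natural route is to rescale, e.g.\ $x=M\xi$, which places the focus near $\xi=-2$ as in Figure~\ref{fig:Ftrozos} while fixing the saddle at the origin; the eigenvalues $B\pm M$ become $\sim 2M$ and $\sim -1$, so the unstable separatrix is nearly vertical and the stable one of slope near $-1$, matching the figure. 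A companion scaling of $y$ turns the problem into a slow--fast one, and analysing this singular limit is where the work concentrates.

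I expect the main obstacle to be exactly this uniform asymptotic control. In Proposition~\ref{pr:cubica} the parameter ranged over a compact interval, so finitely many Sturm/resultant computations sufficed; here $M\to\infty$, so the sign conditions in $(\mathrm{II})$ and the corner inequalities in $(\mathrm{III})$ must instead be obtained from expansions of the relevant resultants in powers of $1/M$ whose leading terms have a definite sign, together with explicit remainder bounds guaranteeing that the sign persists once $M>M_\alpha$. The delicate balancing step is to make the middle arc and the return segment meet the separatrix-approximating arc with outward transversality while keeping $\gamma$ inside the strip of width $\alpha/M^2$; the freedom in $\alpha$ is precisely what lets the outward inequality close for every $\alpha>0$, at the cost of enlarging $M_\alpha$.
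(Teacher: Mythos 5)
Your proposal is correct and follows essentially the same route as the paper: a three-piece piecewise algebraic loop (a segment along the stable eigendirection of slope $B-M$, a middle quadratic arc crossing the negative $x$-axis between the first returns of the two separatrices, and an algebraic arc at the saddle matching the separatrices), shown to bound a negatively invariant region via resultant/sign computations whose validity for large $M$ is obtained from asymptotic expansions at $M=\infty$, followed by Poincar\'e--Bendixson in reverse time. The only cosmetic difference is your suggested rescaling $x=M\xi$ and slow--fast framing; the paper instead works directly with the asymptotic expansions of the roots of the resultants (e.g.\ locating them near $-2M$ and $-4M$ and choosing the junction point at $-3M$).
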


\begin{proof}
We already know that when $B=M-1$ the system has no limit cycles, so we only need
to prove that, for every $\alpha$, there exist a curve $B=M-1+\alpha/M^2$ and a
value $M_\alpha$ such that on this curve the system has a limit cycle when
$M>M_\alpha$. So we will assume that $B=M-1+\alpha/M^2$.

We propose a curve $C(x,y)=0$ formed by three pieces $F_i$ of
different algebraic curves of degree $i$, for $i=1,2,3.$
Figure~\ref{fig:Fi} shows the shape of the curve $C(x,y)=0$ and the
corresponding pieces. For each $i$, we use the same $F_i$ to denote
the algebraic curve $F_i(x,y)=0$ that contains the corresponding
piece.

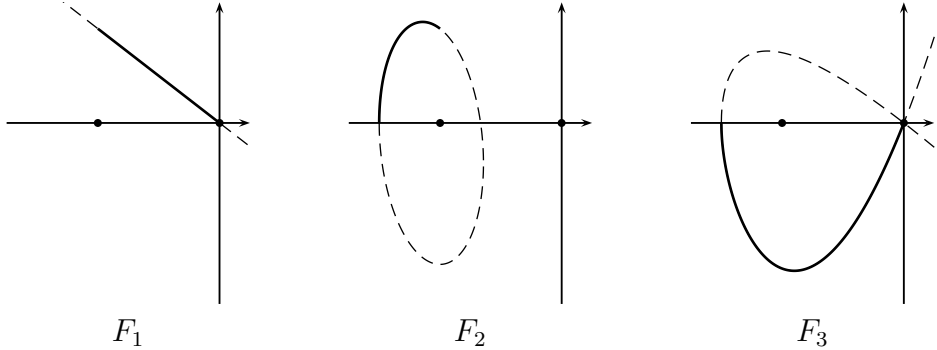
\begin{figure}[h]
\begin{center}
\begin{tabular}{c@{\hspace{1.3cm}}c@{\hspace{1.3cm}}c}
\psset{xunit=0.8cm,yunit=0.8cm,algebraic=true,arrowsize=3pt}
\begin{pspicture*}(-3.5,-3)(0.5,2)
\psline[linewidth=.7pt]{->}(-3.5,0)(0.5,0)
\psline[linewidth=.7pt]{->}(0,-3)(0,2)
\psdots[dotsize=3pt](0,0)(-2,0)
\psplot[linestyle=dashed,linewidth=0.5pt,plotpoints=1500]{-3}{1}{(8/9)*x-(1/9)*x*sqrt(226)}
\psplot[linestyle=solid,linewidth=1pt,plotpoints=1500]{-2}{0}{(8/9)*x-(1/9)*x*sqrt(226)}
\end{pspicture*}
& \psset{xunit=0.8cm,yunit=0.8cm,algebraic=true,arrowsize=3pt}
\begin{pspicture*}(-3.5,-3)(0.5,2)
\psline[linewidth=.7pt]{->}(-3.5,0)(0.5,0)
\psline[linewidth=.7pt]{->}(0,-3)(0,2)
\psdots[dotsize=3pt](0,0)(-2,0)
\psplot[linestyle=dashed,linewidth=0.5pt,plotpoints=1500]{-3}{-1.286}{
4/3-(1/6)*sqrt(226)+(4/9)*x-(1/18)*x*sqrt(226)+(1/18)*sqrt(-39150+2160*sqrt(226)-43500*x
+ 2400*x*sqrt(226)-10150*x^2+560*x^2*sqrt(226))}
\psplot[linestyle=dashed,linewidth=0.5pt,plotpoints=1500]{-3}{-1.286}{
4/3-(1/6)*sqrt(226)+(4/9)*x-(1/18)*x*sqrt(226)-(1/18)*sqrt(-39150+2160*sqrt(226)-43500*x
+ 2400*x*sqrt(226)-10150*x^2+560*x^2*sqrt(226))}
\psplot[linestyle=solid,linewidth=1pt,plotpoints=1500]{-3}{-2}{
4/3-(1/6)*sqrt(226)+(4/9)*x-(1/18)*x*sqrt(226)+(1/18)*sqrt(-39150+2160*sqrt(226)-43500*x
+ 2400*x*sqrt(226)-10150*x^2+560*x^2*sqrt(226))}
\end{pspicture*}
& \psset{xunit=0.8cm,yunit=0.8cm,algebraic=true,arrowsize=3pt}
\begin{pspicture*}(-3.5,-3)(0.5,2)
\psline[linewidth=.7pt]{->}(-3.5,0)(0.5,0)
\psline[linewidth=.7pt]{->}(0,-3)(0,2)
\psdots[dotsize=3pt](0,0)(-2,0)
\psplot[linestyle=solid,linewidth=1pt,plotpoints=1500]{-3}{0}{
2/9*1/(8256+141*sqrt(226)+266*x)*((11008*x+188*x*sqrt(226)+33024+564*sqrt(226)+sqrt(158811104*x^2
+ 4645339*x^2*sqrt(226)+1844762403*x+57783633*x*sqrt(226)+4104987273+131542848*sqrt(226)))*x)}
\psplot[linestyle=dashed,linewidth=0.5pt,plotpoints=1500]{-3}{0.5}{
2/9*1/(8256+141*sqrt(226)+266*x)*((11008*x+188*x*sqrt(226)+33024+564*sqrt(226)+sqrt(158811104*x^2
+ 4645339*x^2*sqrt(226)+1844762403*x+57783633*x*sqrt(226)+4104987273+131542848*sqrt(226)))*x)}
\psplot[linestyle=dashed,linewidth=0.5pt,plotpoints=1500]{-3}{0.5}{
2/9*1/(8256+141*sqrt(226)+266*x)*((11008*x+188*x*sqrt(226)+33024+564*sqrt(226)-sqrt(158811104*x^2
+ 4645339*x^2*sqrt(226)+1844762403*x+57783633*x*sqrt(226)+4104987273+131542848*sqrt(226)))*x)}
\end{pspicture*}\\
$F_1$&$F_2$&$F_3$
\end{tabular}
\end{center}
\caption{\small The different pieces $F_i,i=1,2,3$}\label{fig:Fi}
\end{figure}

The first one is
\[
F_1:=\{(x,(B-M)x)\,:\, B^2-M^2\le x\le0\}.
\]
Notice that it is given by the segment of the straight line tangent
to the stable separatrix that starts at the origin and ends at the
point $(x_1,y_1)$, where $x_1=B^2-M^2$ is the $x$-coordinate of the
focus. Hence $F_1(x,y)= y-(B-M)x.$

We take $F_2$ as a portion of the quadratic curve
$F_2(x,y)=1+a_{1,0}x+a_{0,1}y+a_{2,0}x^2+a_{1,1}xy+a_{0,2}y^2=0$
that passes trough $(x_1,y_1)$, is tangent to $F_1$ at this point,
passes also by the point $(x_2,0)$, with $x_2=-3M$, and coincides
at this point, until second order derivatives, with the solution of
the differential equation. More concretely, $F_2$ is the piece
between $(x_1,y_1)$ and $(x_2,0).$ The choice of this value for
$x_2$ is motivated in Remark~\ref{re:ab}.

The third piece $F_3$ is contained in a cubic curve similar to the
one defined in \eqref{eq:eqdelloop_cub}.
 We consider
\[
F_3(x,y)=C_2(x,y)+c_{3,0}x^3+c_{2,1}x^2y+c_{1,2}xy^2+c_{0,3}y^3=0.
\]
This curve is tangent to both separatrices at the origin. The four
coefficients of the homogeneous part of degree three will be fixed
to get that $F_3$ approaches two times more the unstable separatrix
at the origin, passes trough $(x_2,0)$ and be tangent to the
solution of the differential equation passing by this point. In
fact, $F_3$ is the piece of the curve between the origin and
$(x_2,0)$ which is contained in the third quadrant.

Let us study now the behavior of the vector field on the three pieces. The gradient of $F_1$ at
$(x_1,y_1)$ is $(M-B,1)$, the gradients of $F_2$ and $F_3$ at $(x_2,0)$ are
$\left(\frac{2(-B^2-3M+M^2)^3}{3(B+M)^2(M-B)^4M},0\right)$ and $\left(-3(B+M)(M-B)M,0\right),$
respectively. If $M>\sqrt{\alpha}$ we can conclude that the gradient of $C$ points to the exterior
of the closed curve $C(x,y)=0$. Thus, if we prove that the algebraic curves defined by $F_i(x,y)=0$
and $\dot{F}_i(x,y)=0,$ the derivative of $F_i$ with respect to the vector field, have no common
points, we can easily conclude that the vector field points to the exterior of the curve $C(x,y)=0$
for $M>\sqrt\alpha.$

The result for the segment of straight line $F_1$ is straightforward.

To study the vector field on $F_2$ we compute $R_2:=\res(F_2,\dot{F}_2,y)$. We get that
$R_2=(x-x_2)^2p_4(x,M,\alpha)$, where $p_4$ is a polynomial of degree
four in $x$. At $M=\infty$, this polynomial has an asymptotic
expansion with dominant term $-M^{-6}(2M+x)^3(4M+x)/46656.$ Hence,
the dominant terms of the asymptotic expansions of the
corresponding roots are $-4M$ and $-2M$. This latter value
corresponds to a triple root. A more detailed computation,
considering the next significant term in each coefficient of $p_4$,
shows that the triple root splits into a couple of complex
conjugated roots and a real one, $\widehat x_1$, with asymptotic
expansion
\[\widehat x_1(M,\alpha)=
-2M+\frac{2\sqrt[3]{\alpha}}{\sqrt[3]{9}}M^{2/3}+o(M^{2/3}).\] Since
the other real root of $p_4$ has asymptotic expansion $\widehat
x_2(M,\alpha)=-4M+o(M),$
\[x_1=B^2-M^2=\Big(M-1+\frac{\alpha}{M^2}\Big)^2-M^2=-2M+1+O\Big(\frac1M\Big),\]
and $x_2=-3M$, it holds that for $M>\widetilde{M}_2(\alpha)$ big enough,
\[
\widehat x_2(M,\alpha)<x_2<x_1<\widehat x_1(M,\alpha).
\]
Therefore, for $M>\widetilde{M}_2(\alpha)$, $R_2$ does not change
sign on $(x_2,x_1)$, as we wanted to prove. Moreover, it is easy to
see that the vector field points to the exterior of the loop on
$F_2$.

The resultant $R_3$ of $F_3$ and $\dot{F}_3$ with respect to $x$ is
of the form $y^9q_3(y,M,\alpha)$, where $q_3$ is a polynomial of degree
three in $y$. The asymptotic expansion of the ordered coefficients of
$q_3$ at $M=\infty$ shows the sign configuration $[+,-,+,-]$ for
$M>\widetilde{M}_3(\alpha)$ large enough. Thus, it is clear that $q_3$
has no negative zeros. Then, for $M>\widetilde{M}_3(\alpha),$ $R_3$
does not vanish in the half-plane $y<0$ where $F_3$ is defined.
Similarly to the previous cases, we can check that the vector field
points to the exterior of $C(x,y)=0$ along $F_3$.

\begin{figure}[h]
\begin{center}
\psset{xunit=0.8cm,yunit=0.8cm,algebraic=true,arrowsize=3pt}
\begin{pspicture*}(-3.5,-3)(0.5,2)
\psline[linewidth=.7pt]{->}(-3.5,0)(0.5,0)
\psline[linewidth=.7pt]{->}(0,-3)(0,2)
\psdots[dotsize=3pt](0,0)(-2,0)
\psplot[linestyle=solid,linewidth=1pt,plotpoints=1500]{-3}{0}{
2/9*1/(8256+141*sqrt(226)+266*x)*((11008*x+188*x*sqrt(226)+33024+564*sqrt(226)+sqrt(158811104*x^2
+ 4645339*x^2*sqrt(226)+1844762403*x+57783633*x*sqrt(226)+4104987273+131542848*sqrt(226)))*x)}
\psplot[linestyle=solid,linewidth=1pt,plotpoints=1500]{-3}{-2}{
4/3-(1/6)*sqrt(226)+(4/9)*x-(1/18)*x*sqrt(226)+(1/18)*sqrt(-39150+2160*sqrt(226)-43500*x+2400*x
*sqrt(226)-10150*x^2+560*x^2*sqrt(226))}
\psplot[linestyle=solid,linewidth=1pt,plotpoints=1500]{-2}{0}{(8/9)*x-(1/9)*x*sqrt(226)}
\psline[linestyle=solid,linewidth=0.5pt]{->}(-3.02,-0.2)(-3.02,0.7)
\psline[linestyle=solid,linewidth=0.5pt]{->}(-2.85,0.9)(-2.65,1.7)
\psline[linestyle=solid,linewidth=0.5pt]{->}(-2.3,1.5)(-1.7,1.6)
\psline[linestyle=solid,linewidth=0.5pt]{->}(-1.3,0.85)(-0.7,0.75)
\psline[linestyle=solid,linewidth=0.5pt]{->}(-0.4,0.22)(0,0.1)
\psline[linestyle=solid,linewidth=0.5pt]{->}(-2.6,-1.5)(-3,-0.9)
\psline[linestyle=solid,linewidth=0.5pt]{->}(-1.7,-2.35)(-2.3,-2.45)
\psline[linestyle=solid,linewidth=0.5pt]{->}(-0.9,-1.5)(-1.1,-2.3)
\psline[linestyle=solid,linewidth=0.5pt]{->}(-0.1,-0.1)(-0.3,-0.9)
\end{pspicture*}
\end{center}
\vspace{-3mm} \caption{\small The negatively invariant region corresponding to the
piecewise loop}\label{fig:campoFtrozos}
\end{figure}
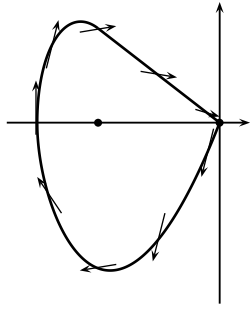

The proof finishes taking $M_{\alpha}$ as the maximum of $\widetilde{M}_2(\alpha)$,
$\widetilde{M}_3(\alpha)$ and $\sqrt{\alpha}.$ Then we have the situation given
in Figure~\ref{fig:campoFtrozos}.
\end{proof}

Next remark clarifies some details about the previous proof. Lemma~\ref{le:disc} will follow
directly applying the above result for a concrete value of $\alpha.$ The corresponding $M_{\alpha}$
is also given.

\begin{remark}\label{re:ab}
The choice of the point $(x_2,0)=(-3M,0)$ in the proof of
Proposition~\ref{prop:M_grande} is motivated by some numerical
computations. We wanted to choose a point on the $x$-axis that was
between the first crossing point of the separatrices of the saddle
point with the negative $x$-axis. Let us call $(P_s,0)$ the first
crossing point for the stable separatrix, and $(P_u,0)$ for the
unstable one. For several values of $\alpha$, using a
Runge-Kutta-Fehlberg 4-5 method, together with degree four
interpolation, we obtain numerical approximations of the points
$P_s$ and $P_u$ for every $M$. Figure~\ref{fig:separatrices}(b) shows
the plots of $-P_s/M$ and $-P_u/M$ together with the corresponding
value of the abscissa of the focus point. We remark that for the values of
$\alpha$ that we have checked the limit behavior is always the
same. Hence, the asymptotic expansions at $M=\infty$ of $(P_s,0)$ and
$(P_u,0)$ seem to be $(-2M,0)$ and $(-4M,0)$, respectively; see
Figure~\ref{fig:separatrices}. So, our choice of $x_2=-3M$ is quite
natural.
\end{remark}

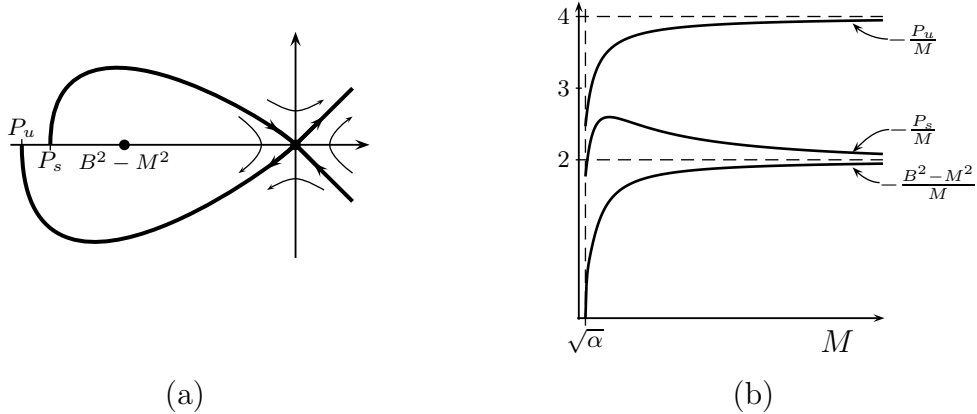
\begin{figure}[h]
\begin{center}
\begin{tabular}{c@{\hspace{2cm}}c}
\raisebox{.5\height}{
\psset{xunit=1.5cm,yunit=1.5cm,algebraic=true,arrowsize=4pt}
\begin{pspicture*}(-2.6,-1.1)(0.75,1.1)
\psline[linewidth=.7pt]{->}(-2.5,0)(0.65,0)
\psline[linewidth=.7pt]{->}(0,-1)(0,1)
\psdots[dotsize=4pt](0,0)(-1.5,0)
\psbezier[linewidth=1.5pt,plotpoints=500]{-}(-2.4,0)(-2.4,-1.8)(-0.25,-0.25)(0,0)
\psbezier[linewidth=1.5pt,plotpoints=500]{-}(0,0)(0.25,0.25)(0.2,0.2)(0.5,0.5)
\psbezier[linewidth=1.5pt,plotpoints=500]{<->}(-0.25,-0.21)(0,0)(0,0)(0.25,0.25)
\psline[linewidth=.5pt]{-}(-2.4,0.05)(-2.4,-0.05)
\psbezier[linewidth=1.5pt,plotpoints=500]{-}(-2.15,0)(-2.15,1.4)(-0.25,0.25)(0,0)
\psbezier[linewidth=1.5pt,plotpoints=500]{-}(0,0)(0.25,-0.25)(0.2,-0.2)(0.5,-0.5)
\psbezier[linewidth=1.5pt,plotpoints=500]{>-<}(-0.25,0.19)(0,0.025)(0,0)(0.25,-0.25)
\psline[linewidth=.5pt]{-}(-2.15,0.05)(-2.15,-0.05)
\pscurve[linewidth=0.5pt,arrowsize=2pt,plotpoints=500]{<-}(-0.25,-0.4)(0,-0.3)
(0.25,-0.4)
\pscurve[linewidth=0.5pt,arrowsize=2pt,plotpoints=500]{->}(-0.25,0.4)(0,0.3)
(0.25,0.4)
\pscurve[linewidth=0.5pt,arrowsize=2pt,plotpoints=500]{->}(-0.5,0.25)(-0.3,0)
(-0.5,-0.25)
\pscurve[linewidth=0.5pt,arrowsize=2pt,plotpoints=500]{->}(0.5,-0.25)(0.3,0)
(0.5,0.25)
\rput(-1.5,-0.15){\tiny{$B^2-M^2$}} \rput(-2.15,-0.15){\scriptsize{$P_s$}}
\rput(-2.4,0.15){\scriptsize{$P_u$}}
\end{pspicture*}}
&
\psset{xunit=0.2cm,yunit=1cm,algebraic=true,arrowsize=3pt}
\begin{pspicture*}(-3,-1)(26,4.3)
\psline[linewidth=.7pt]{->}(-0.1,0)(20.1,0)
\psline[linewidth=.7pt]{->}(0,-0.02)(0,4.2)
\psline[linewidth=.5pt,linestyle=dashed]{-}(-0.2,4)(20,4)
\psline[linewidth=.5pt]{-}(-0.2,3.05)(0.2,3.05)
\psline[linewidth=.5pt,linestyle=dashed]{-}(-0.2,2.1)(20,2.1)
\psline[linewidth=.5pt,linestyle=dashed]{-}(0.437213,4.1)(0.437213,-0.1)
\psplot[linestyle=solid, linewidth=1pt,
plotpoints=300]{0.437213}{20}{16*x/(4*x+1)} \psplot[linestyle=solid,
linewidth=1pt, plotpoints=300]{0.437213}{20}{2*x*(x+4)/(x+1)^2}
\psplot[linestyle=solid, linewidth=1pt,
plotpoints=300]{0.437213}{20}{(1/25)*(5*x^2-1)*(10*x^3-5*x^2+1)/x^5+0.1}
\rput(17,-0.3){$M$} \rput(0.4,-0.3){\scriptsize{$\sqrt{\alpha}$}}
\rput(-1,2.1){\scriptsize{$2$}} \rput(-1,3.05){\scriptsize{$3$}}
\rput(-1,4){\scriptsize{$4$}}
\rput(22,3.7){\rnode{A}{\scriptsize{$-\frac{P_u}{M}$}}} \rput(18,3.91){\pnode{B}}
\rput(22,2.5){\rnode{C}{\scriptsize{$-\frac{P_s}{M}$}}} \rput(18,2.2){\pnode{D}}
\rput(23,1.8){\rnode{E}{\scriptsize{$-\frac{B^2-M^2}{M}$}}} \rput(18,2){\pnode{F}}
\nccurve[linewidth=0.5pt,angleA=180,angleB=-45]{->}{A}{B}
\nccurve[linewidth=0.5pt,angleA=180,angleB=45]{->}{C}{D}
\nccurve[linewidth=0.5pt,angleA=180,angleB=-45]{->}{E}{F}
\end{pspicture*}\\[-10pt]
(a)&(b)
\end{tabular}
\end{center}
\caption{\small Separatrices of the saddle point}\label{fig:separatrices}
\end{figure}

\section{Proof of Theorems~\ref{thm:n_grande} and \ref{thm:cota_global}}\label{se:5}
To prove Theorem~\ref{thm:n_grande} we need to translate the results of
Proposition~\ref{prop:M_grande} to the $m,b$ parameters.

\begin{lemma}\label{le:BM_bm}
The change of variables \eqref{cv:nb_a_MB} converts the curve $B=M-1+\alpha /M^{2}$
into the branch of the curve
\begin{equation}\label{eq:BMtobm}
\begin{aligned}
-&4m^5-12m^4b-8m^3b^2+8m^2b^3+12mb^4+4b^5-60m^4-48m^3b+88m^2b^2\\
+&80mb^3+4b^4+(-192-16\alpha)m^3+(384-48\alpha)m^2b+(64-48\alpha)mb^2\\
-&16\alpha b^3+(256-160\alpha)m^2-192\alpha mb-32\alpha b^2-256\alpha m+64\alpha^2=0,
\end{aligned}
\end{equation} that satisfies
\begin{equation}\label{eq:BMtobm2}
b=m-1+\frac{2\alpha}{m}- \frac{\alpha}{m^{2}}+O\left(\frac{1}{m^{3}}\right),
\end{equation} when $m$ goes to $\infty$.
Moreover, it remains below the curve $b=m-1+2\alpha/ m$ for every positive $m$ and
$\alpha$.
\end{lemma}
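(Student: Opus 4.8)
The plan is to establish the three assertions in turn, always working through the change of variables \eqref{cv:nb_a_MB}, under which $B=\tfrac{b+m}{2}$ and $M^{2}=\tfrac{(b+m)^{2}+8m}{4}$ are polynomials in $(m,b)$. First I would rewrite the curve $B=M-1+\alpha/M^{2}$ in the polynomial form $M^{3}=(B+1)M^{2}-\alpha$. Since the left-hand side is the only place an odd power of $M$ survives, squaring removes it: $M^{6}=\bigl((B+1)M^{2}-\alpha\bigr)^{2}$. Now $M^{6}=(M^{2})^{3}$, so substituting the polynomial expressions for $M^{2}$ and $B$ and multiplying by $64$ should reproduce exactly \eqref{eq:BMtobm}. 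A good consistency check is the drop in degree: the would-be degree-$6$ part equals $(M^{2})^{2}\bigl((B+1)^{2}-M^{2}\bigr)$ and a short computation gives $(B+1)^{2}-M^{2}=b-m+1$, which is only degree one; hence the $\alpha$-free part factors as $4\bigl((b+m)^{2}+8m\bigr)^{2}(b-m+1)$, whose leading monomial is $-4m^{5}$, matching the first term of \eqref{eq:BMtobm}. Squaring introduces a spurious branch (the sign $(B+1)M^{2}-\alpha=-M^{3}$); the relevant branch is the one with $M=+\sqrt{M^{2}}>0$, i.e. $(B+1)M^{2}-\alpha=+M^{3}$, which is the one lying in $\mathcal R$.

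For the asymptotics I would parametrize this branch by $M$. From $M^{2}-B^{2}=2m$ one gets $m=\tfrac{M^{2}-B^{2}}{2}$ and $b=2B-m$; inserting $B=M-1+\alpha/M^{2}$ yields the exact expressions
\[
m=M-\tfrac12-\frac{\alpha}{M}+\frac{\alpha}{M^{2}}-\frac{\alpha^{2}}{2M^{4}},\qquad b-(m-1)=\frac{2\alpha}{M}+\frac{\alpha^{2}}{M^{4}} .
\]
Inverting the first relation asymptotically gives $M=m+\tfrac12+\tfrac{\alpha}{m}+O(1/m^{2})$, hence $\tfrac1M=\tfrac1m-\tfrac1{2m^{2}}+O(1/m^{3})$; substituting this into the second expression produces $b=m-1+\tfrac{2\alpha}{m}-\tfrac{\alpha}{m^{2}}+O(1/m^{3})$, which is precisely \eqref{eq:BMtobm2}.

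Finally, for the bound I would exploit the exact identity $b-(m-1)=\tfrac{2\alpha}{M}+\tfrac{\alpha^{2}}{M^{4}}$ just derived: the claim $b<m-1+\tfrac{2\alpha}{m}$ is equivalent to $\tfrac{2\alpha}{M}+\tfrac{\alpha^{2}}{M^{4}}<\tfrac{2\alpha}{m}$, and clearing the positive denominators reduces it to the polynomial inequality $2M^{3}(M-m)>\alpha m$. Substituting the exact formula for $m$ and multiplying by $M^{4}$ turns this into the positivity of the cubic in $\alpha$
\[
q(\alpha)=\tfrac12\alpha^{3}+M^{2}(2M-1)\alpha^{2}+M^{4}\bigl(2M^{2}-3M+\tfrac12\bigr)\alpha+M^{7}.
\]
The admissible range is exactly $0<\alpha<M^{2}$: writing $m=\tfrac12\bigl(1-\tfrac{\alpha}{M^{2}}\bigr)\bigl(2M-1+\tfrac{\alpha}{M^{2}}\bigr)$ one checks that $m>0$ forces $\alpha<M^{2}$, equivalently $M>\sqrt\alpha$, in accordance with Proposition~\ref{prop:M_grande}. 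For $M\ge\tfrac12$ the quadratic coefficient is nonnegative, so dropping the (strictly positive) cubic and the nonnegative quadratic terms leaves an affine lower bound whose minimum over $[0,M^{2}]$ is attained either at $\alpha=0$, giving $M^{7}>0$, or at $\alpha=M^{2}$, giving the perfect square $\tfrac{M^{6}}{2}(2M-1)^{2}\ge0$; hence $q(\alpha)>0$ throughout.

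The main obstacle is the \emph{global} nature of this positivity. The crude bounding above breaks down for small $M$, where the quadratic coefficient $M^{2}(2M-1)$ becomes negative, so there I would instead invoke the sharper constraint $\alpha>M^{2}(1-2M)$ that $m>0$ imposes when $M<\tfrac12$, or simply certify $q(\alpha)>0$ on $0<\alpha<M^{2}$ by a Sturm/resultant argument of the kind already automated elsewhere in the paper. It is essential that the reasoning use $m>0$: beyond $\alpha=M^{2}$ the cubic $q$ does change sign, so without this constraint the stated inequality would be false.
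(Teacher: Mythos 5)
Your proof is essentially correct but follows a genuinely different route from the paper's, most notably in the final assertion. The paper disposes of the conversion to \eqref{eq:BMtobm} as a ``straightforward computation'' (your squaring of $M^{3}=(B+1)M^{2}-\alpha$ together with the factorization of the $\alpha$-free part as $4\bigl((b+m)^{2}+8m\bigr)^{2}(b-m+1)$ is exactly right), and then proves the ``remains below'' claim by substituting $b=m-1+2\alpha/m$ into \eqref{eq:BMtobm}, clearing denominators to obtain a degree-seven polynomial $q(m)$ with $\alpha$-dependent coefficients, and showing via a parametric Sturm sequence that it has no positive roots; combined with the asymptotics \eqref{eq:BMtobm2} and continuity of the branch this yields the result. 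You instead parametrize the branch by $M$ through $m=(M^{2}-B^{2})/2$, $b=2B-m$, obtaining the exact identity $b-(m-1)=2\alpha/M+\alpha^{2}/M^{4}$, from which \eqref{eq:BMtobm2} follows immediately and the inequality reduces to the positivity of an explicit cubic $q(\alpha)$ on $0<\alpha<M^{2}$. This buys transparency: the asymptotic expansion and the strict lower bound $b>m-1$ come for free, the branch ambiguity created by squaring never arises, and no parametric Sturm computation is needed. The one genuine incompleteness is the regime $M<1/2$, which you correctly flag but do not close: it is not vacuous, since for $\alpha<1/4$ the branch with $m>0$ extends down to values of $M$ strictly below $1/2$ (namely down to the largest root of $2M^{3}-M^{2}+\alpha=0$), and there the coefficient $M^{2}(2M-1)$ is negative so your affine lower bound fails. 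Your proposed fix does work: writing $\alpha=M^{2}t$ reduces the claim to $r(t)=\tfrac12t^{3}+(2M-1)t^{2}+(2M^{2}-3M+\tfrac12)t+M>0$ on the admissible interval $(1-2M,1)$, and one checks that $r$ equals $2M^{2}>0$ at both endpoints with interior minimum still of order $M^{2}$; but as written this step is asserted rather than proved, so you should either carry out that elementary estimate or fall back on the paper's Sturm-type certification for small $M$.
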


\begin{proof}
Straightforward computations give~\eqref{eq:BMtobm}. To prove the
last assertion it suffices to see that both curves do not have common
points. Clearly, from~\eqref{eq:BMtobm2}, the result holds for $m$
big enough.

The common points of both curves are characterized by the roots of the polynomial obtained
substituting $b=m-1+2\alpha /m$ in \eqref{eq:BMtobm}. Removing the denominator we obtain
\begin{align*}
q(m):=&\,8{m}^{7}+ \left( 16\alpha+12 \right) {m}^{6}+ \left( 24\alpha+6 \right) {m}^{5}+ 
\left(48{\alpha}^{2}-12\alpha+1 \right) {m}^{4}\\
&\,-8\alpha \left( 3\alpha+1 \right) {m}^{3}+24{\alpha}^{2} \left( 2\alpha+1 \right)
{m}^{2}-32m{\alpha}^{3}+16{\alpha}^{4}.
\end{align*}
The computation of its Sturm sequence evaluated at $0$ and at $+\infty$ gives the
configurations of signs $[+,-,-,-,+,+,-,-]$ and $[+,+,+,-,-,+,-,-]$, respectively,
for each positive $\alpha$. Hence, $q(m)$ does not have positive roots, and
therefore both curves have no common points when $m>0$ and $\alpha>0$.
\end{proof}

\begin{proof}[Proof of Theorem~\ref{thm:n_grande}]
From Proposition~\ref{prop:M_grande} and Lemma~\ref{le:BM_bm}, for each $\alpha>0,$
the curve $b= m -1+2\alpha/ m $ is an upper bound of $b=b^*(m),$ for $m>m_\alpha$,
where $m_\alpha$ can be obtained applying the transformation~\eqref{cv:nb_a_MB} to
the region $M>M_\alpha.$ Therefore, for $m>m_{\alpha}$,
\[
 m -1<b^*(m)< m -1+\frac{2\alpha}{ m },
\]
which implies
\[
\lim_{m\to\infty}\frac{b^*(m)- m +1}{1/ m }=0,
\]
because $\alpha$ is an arbitrary positive number.
\end{proof}

Before proving Theorem \ref{thm:cota_global} we
need a preliminary result.

\begin{lemma}\label{le:disc}
It holds that
\[
b^*(m)< m -1+\frac{51}{20 m },
\]
for $m>\widetilde {m},$ where $\widetilde {m}\approx 6.93$ is the unique
positive root of the polynomial
\[
\begin{aligned}
 &50331648000000000 \, {m}^{17}-243269632000000000 \, {m}^{16}\\
-&2129238425600000000 \, {m}^{15} -7211878973440000000 \, {m}^{14} \\
+&111668173209600000000 \, {m}^{13} +264470739812352000000 \, {m}^{12} \\
-&130466347912396800000 \, {m}^{11} -9197101546824499200000 \, {m}^{10}\\
-&6302900112535388160000 \, {m}^{9} +6325778059290335232000 \, {m}^{8}\\
+&2289016716587559936000 \, {m}^{7} -46572462911915224012800 \, {m}^{6} \\
+&8515659923453703340800 \, {m}^{5} -4901243812728523876800 \, {m}^{4}\\
-&45716337137659722706080 \, {m}^{3} +6052551315638078774880 \, {m}^{2}\\
-&8203038242422388605200 \, m-7953608649353382254007.
\end{aligned}
\]
\end{lemma}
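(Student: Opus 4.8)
The plan is to obtain the bound as the special case $\alpha=51/40$ of the machinery already developed for large $M$. Since $2\alpha=51/20$, Lemma~\ref{le:BM_bm} tells us that the curve $B=M-1+\alpha/M^2$ is transformed by \eqref{cv:nb_a_MB} into the branch \eqref{eq:BMtobm}, which by that same lemma lies below $b=m-1+51/(20m)$ for every $m>0$; on the other hand, by \eqref{eq:BMtobm2} its asymptotic expansion is $b=m-1+51/(20m)-\tfrac{51}{40m^2}+O(1/m^3)$. Proposition~\ref{prop:M_grande} applied to this $\alpha$ gives a threshold $M_\alpha$ such that the saddle-loop curve $\{W(M,B)=0\}$ lies below $B=M-1+\alpha/M^2$ whenever $M>M_\alpha$ (on that curve the piecewise-loop construction yields a negatively invariant region, hence a limit cycle, so $b^*$ sits strictly below). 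Composing the two facts, $b^*(m)<m-1+51/(20m)$ for every $m$ in the image of $\{M>M_\alpha\}$ under \eqref{cv:nb_a_MB}; it remains to identify that range of $m$ with $\{m>\widetilde m\}$ and to check that $\widetilde m$ is the stated root.

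The work therefore reduces to making the threshold effective for the concrete value $\alpha=51/40$. The proof of Proposition~\ref{prop:M_grande} produces $M_\alpha$ as the maximum of $\widetilde M_2(\alpha)$, $\widetilde M_3(\alpha)$ and $\sqrt\alpha$, extracted from asymptotic ($M\to\infty$) expansions that are not numerically effective as stated. To pin down the sharp value $\widetilde m\approx 6.93$ I would instead revisit the three no-contact conditions directly for $\alpha=51/40$: the ordering $\widehat x_2<x_2<x_1<\widehat x_1$ of the relevant roots of $p_4$ controlling $F_2$, the sign pattern $[+,-,+,-]$ of $q_3$ controlling $F_3$, and the outward-pointing condition on the three pieces. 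For a fixed $\alpha$ each of these is an algebraic inequality in $M$ alone, and the piecewise loop first becomes a valid negatively invariant region at the largest $M$ where one of them degenerates, i.e. where a root of $p_4$ meets an endpoint $x_1$ or $x_2$, or a root of $q_3$ reaches the $x$-axis. That degeneration is the vanishing of a suitable resultant or discriminant and defines $M_\alpha$ algebraically.

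Finally I would translate $M=M_\alpha$ into a condition on $m$. On the curve $B=M-1+\alpha/M^2$ the relations coming from \eqref{cv:nb_a_MB} read $b+m=2B$ and $m=(M^2-B^2)/2$, so $m$ is a rational function of $M$; eliminating $M$ between this relation and the degeneration condition, with $\alpha=51/40$, produces a single polynomial equation in $m$, which I expect to be exactly the degree-$17$ polynomial in the statement. A Sturm-sequence computation, as used throughout Section~\ref{se:3}, then shows this polynomial has a unique positive root $\widetilde m\approx 6.93$, and one verifies that $m>\widetilde m$ corresponds to $M>M_\alpha$. The main obstacle is precisely this last upgrade: the estimates in Proposition~\ref{prop:M_grande} hold only for $M$ large, so proving that the ordering and sign conditions persist on the \emph{whole} half-line $m>\widetilde m$ — rather than merely for $m$ sufficiently large — is what forces the heavy resultant-and-Sturm analysis and what fixes the exact threshold appearing in the statement.
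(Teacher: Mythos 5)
Your proposal follows the paper's proof essentially verbatim: specialize Proposition~\ref{prop:M_grande} to $\alpha=51/40$, make the threshold $M_{51/40}$ effective by turning the no-contact and root-ordering conditions of the piecewise loop into explicit algebraic equations in $M$ (the paper's equation \eqref{eq:MM}, with largest root $\approx 7.58$), and then obtain the degree-$17$ polynomial for $\widetilde m$ by eliminating between that condition and the transformed curve \eqref{eq:BMtobm} via a resultant, invoking Lemma~\ref{le:BM_bm} for the final comparison with $b=m-1+\tfrac{51}{20m}$. The obstacle you flag --- upgrading the merely asymptotic estimates of Proposition~\ref{prop:M_grande} to hold on the whole half-line --- is exactly the step the paper handles by ``particularizing the analysis'' to this concrete $\alpha$, so no genuinely different route is taken.
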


\begin{proof}
Proposition~\ref{prop:M_grande}, with $\alpha=51/40$, ensures that
\begin{equation*}
B=M-1+\frac{51}{40M^2}
\end{equation*}
defines an upper bound for $W (M,B)=0$ when $M>M_{51/40}.$ Particularizing the analysis done in the
proof of this proposition to this particular $\alpha$ we can obtain an explicit value $M_{51/40}$.
It can be taken as the largest positive root of the equation
\begin{equation}\label{eq:MM}
\begin{aligned}
&196608000000{M}^{17}-1441792000000{M}^{16}-535756800000{M}^{15}\\
+&1480294400000{M}^{14}-1310515200000{M}^{13}-1151979520000{M}^{12}\\
+&1314478080000{M}^{11}-727741440000{M}^{10}-273666816000{M}^{9}\\
+&443460096000{M}^{8}-458441856000{M}^{7}+61550064000{M}^{6}+227310753600{M}^{5}\\
-&162364824000{M}^{4}-41403030120{M}^{3}+82806060240{M}^{2}-17596287801=0,
\end{aligned}
\end{equation}
which is approximately $7.58$. 

Then the result follows from Lemma~\ref{le:BM_bm}. Moreover, the polynomial that defines $\widetilde
{m}$ is obtained computing the resultant with respect to $b$ of \eqref{eq:BMtobm} and the polynomial
corresponding to \eqref{eq:MM} translated to the variables $m$ and $b$.
\end{proof}

It is useful to introduce the following notation for the lower and
upper bounds of $b^*(n)$ given in Theorem~\ref{thm:cota_global}
\begin{equation}\label{eq:bounds}
b_\ell(m):=\max\left(\frac{5m}7,m-1\right)\,\,\,\mbox{ and }\,\,\,
b_u(m):=\min\left(\dfrac{(5+\frac{37}{12} m ) m }{7+\frac{37}{12} m }, m
-1+\dfrac{25}{7 m } \right).
\end{equation}
Notice that the non-differentiability points of $b_\ell$ and $b_u$ are at $m=7/2$
and $m=7,$ respectively.

\begin{proof}[Proof of Theorem~\ref{thm:cota_global}]
Proposition~\ref{pr:cubica} provides the lower bound given in the statement, when $m\leq 7/2$. For
$m>7/2$, Perko gives a proof in~\cite{Per92}. A different one is presented in 
Lemma~\ref{lem:rectainva}.

The proof of the second part is done by comparison of the curves in the statement with
the ones provided by Proposition~\ref{prop:M_pequeno} and Lemma~\ref{le:disc}.

For $m\geq 7,$ $b=b_u(m)$ is an upper bound from
Lemma~\ref{le:disc} because $\widetilde {m}<7$ and $25/7>51/20$.

When $m\leq 7$ the proof starts translating the curve $D(M,B)=0$
given in Proposition~\ref{prop:M_pequeno} to a new algebraic curve
$E(m,b)=0$, of degree $25$ with $257$ monomials. Now we compare
the curves $(84+37m)b-(60+37m)m=0,$ corresponding to $b_u(m)=0$, and
$E(m,b)=0$ when $m,b>0.$ The resultant with respect to $b$ of
both polynomials takes the form $m^{15}p_{34}(m),$ where $p_{34}$ is a
polynomial of degree $34$ with a unique positive zero, $m_1$, as can
be easily seen from the Sturm method. Hence, the curves only
intersect at $(0,0)$ and $(m_1,b_1),$ where $m_1\approx7.1$ A
local study of the curves close to the origin shows that $b=b_u(m)$
is above $E(m,b)=0$. Hence, as the relative position of the
graphs of both curves does not change when $0<m\leq 7,$ $b=b_u(m)$
is also an upper bound of $b=b^*(m)$ in the full interval.
\end{proof}

\begin{corollary}\label{cor:cotas}
Set $\widetilde b(m)=(b_u(m)+b_\ell(m))/2$, where $b_\ell$ and $b_u$ are given in~\eqref{eq:bounds}.
Then, the absolute and relative errors when we approximate $b^*(m)$ by $\widetilde b(m)$, are 
\[
\max_{m>0} |b^*(m)-\widetilde b(m)|<\frac{37}{122}<0.31\quad\mbox{and}\quad
\max_{m>0} \left|\frac{b^*(m)-\widetilde b(m)}{b^*(m)}\right|<\frac{37}{305}<0.13.
\]
\end{corollary}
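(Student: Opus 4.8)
The plan is to reduce everything to elementary estimates by exploiting the sandwich already established. By Theorem~\ref{thm:cota_global} we have $b_\ell(m)<b^*(m)<b_u(m)$, and by definition $\widetilde b(m)=(b_u(m)+b_\ell(m))/2$ is the midpoint of this interval, so for every $m>0$
\[
|b^*(m)-\widetilde b(m)|<\frac{b_u(m)-b_\ell(m)}{2}\quad\mbox{and}\quad
\frac{|b^*(m)-\widetilde b(m)|}{b^*(m)}<\frac{b_u(m)-b_\ell(m)}{2\,b_\ell(m)},
\]
where in the second inequality I also used $b^*(m)>b_\ell(m)>0$. This removes $b^*$ from the problem: both claims follow once I bound the completely explicit functions $w(m):=b_u(m)-b_\ell(m)$ and $w(m)/b_\ell(m)$. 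Note that Theorem~\ref{thm:n_grande} is not needed for this corollary; only the two-sided bounds of Theorem~\ref{thm:cota_global} enter.

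Next I would perform a piecewise analysis using the two breakpoints recorded after~\eqref{eq:bounds}, namely $m=7/2$ (where $5m/7=m-1$) and $m=7$ (where the two candidates for $b_u$ coincide). On $(0,7/2]$ one has $b_\ell=5m/7$ and $b_u$ the rational branch, and a short computation gives $w(m)=74m^2/(259m+588)$, which is increasing; on $[7/2,7]$ one has $b_\ell=m-1$ and the same $b_u$, giving $w(m)=(13m+84)/(37m+84)$, which is decreasing (its derivative has the sign of $13-37<0$); and on $[7,\infty)$ one has $w(m)=25/(7m)$. Thus $w$ is continuous and its global maximum is the corner value $w(7/2)$, since the two decreasing pieces peak at their left endpoints and the third piece is dominated. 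The pleasant arithmetic, relying on the factorizations $259=7\cdot37$, $427=7\cdot61$, $1813=49\cdot37$, $2989=49\cdot61$, is that $w(7/2)=37/61$, so $w/2\le 37/122$. Dividing by $b_\ell$ and simplifying, the three pieces of $w(m)/(2b_\ell(m))$ are $259m/\bigl(5(259m+588)\bigr)$, $(13m+84)/\bigl(2(37m+84)(m-1)\bigr)$ and $25/\bigl(14m(m-1)\bigr)$; the same monotonicity makes them peak at $m=7/2$, where the factors of $49$ cancel to give exactly $37/305$.

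Finally I would upgrade the bounds $\le 37/122$ and $\le 37/305$ to strict global maxima, and this strictness is the only genuinely delicate point: the pointwise strict inequalities above only yield $\sup\le$ the stated constants. The key is that the bounding functions $w/2$ and $w/(2b_\ell)$ attain their respective values $37/122$ and $37/305$ \emph{only} at the single point $m=7/2$, and both tend to $0$ as $m\to0^+$ and as $m\to\infty$ (directly from the explicit pieces, e.g. $w/2=25/(14m)\to0$). Hence each error function is continuous, nonnegative, and vanishes at both ends, so it attains its global maximum at some $m_0\in(0,\infty)$; at $m_0$ either $m_0\neq 7/2$, where the bounding function is strictly below the constant, or $m_0=7/2$, where strict interiority of $b^*$ forces the error to be strictly below $w(7/2)/2$. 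In both cases the maximum is strictly less than the constant, giving $\max_{m>0}|b^*-\widetilde b|<37/122$ and $\max_{m>0}|(b^*-\widetilde b)/b^*|<37/305$; the comparisons $37/122<0.31$ and $37/305<0.13$ are then immediate.
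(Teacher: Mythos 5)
Your proposal is correct and follows essentially the same route as the paper: reduce to bounding $(b_u-b_\ell)/2$ and $(b_u-b_\ell)/(2b_\ell)$ via the sandwich of Theorem~\ref{thm:cota_global}, then show by a piecewise computation that both maxima occur at $m=7/2$ with values $37/122$ and $37/305$. Your explicit piecewise formulas and the final paragraph upgrading the pointwise strict inequality to a strict global maximum are more detailed than the paper, which simply asserts the location of the maxima and writes the chain of inequalities, but the underlying argument is identical.
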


\begin{proof}
It is not difficult to see that the maxima of the functions $b_u-b_\ell$ and $(b_u-b_\ell)/b_\ell$
are both at $m=7/2.$ Then
\[
\max_{m>0} |b^*(m)-\widetilde b(m)|\le \max_{m>0}
\left|\frac{b_u(m)- b_\ell(m)}2\right|=\left|\frac{b_u(7/2)-
b_\ell(7/2)}2\right|=\frac{37}{122}
\]
and
\[
\max_{m>0} \left|\frac{b^*(m)-\widetilde b(m)}{b^*(m)}\right|\le
\max_{m>0} \left|\frac{b_u(m)-
b_\ell(m)}{2b_\ell(m)}\right|=\left|\frac{b_u(7/2)-
b_\ell(7/2)}{2b_\ell(7/2)}\right|=\frac{37}{305} .
\]
Hence, the corollary follows.
\end{proof}

\begin{remark}
From Theorem \ref{thm:cota_global}, the family of rational functions
\[
b=\frac{5+\beta m }{7+ \beta m } m ,
\]
for every $\beta>0$, approximates $b^*(m)$ in neighborhoods of the origin and the
infinity simultaneously. Nevertheless, the upper bound given in 
Theorem~\ref{thm:cota_global}, that corresponds to $\beta=37/12$, is changed in a
neighborhood of infinity because the family of functions $b= m -1+{\gamma}/{ m }$
is a much better approximation for $m$ big enough. The concrete values of $\beta$
and $\gamma$ are fixed imposing the continuity of $b_u$ and searching nice
expressions for the statement of Theorem~\ref{thm:cota_global}. These values could
be changed to obtain slightly better upper bounds.
\end{remark}

\section{Original formulation of Perko's Conjectures}\label{ap:perko}

In \cite{Per92,Per94}, Perko considers the following expression of the Bogdanov-Takens
system
\begin{equation}\label{sys:perko}
\left\{{\begin{array}{l}u'=v,\\v'=u(u-1)+\mu_1v+\mu_2uv,\end{array}}\right.
\end{equation}
and proves that it has a homoclinic saddle-loop if and only if
$\mu_1=h(\mu_2)$ for some odd analytic function $h.$ Since
$h(-\mu_2)=-h(\mu_2)$ it suffices to study $h$ either for $\mu_2>0$
or for $\mu_2<0.$

\vspace{0.3cm}

\noindent{\bf Original formulation of Perko's Conjectures (\cite{Per92}):} Let
$\mu_1=h(\mu_2)$ the function that gives the saddle-loop bifurcation curve for
system~\eqref{sys:perko}. Then,
\begin{itemize}
\item [($\mathcal{I}$)] the curve $\mu_1= h(\mu_2)$ is asymptotic to the hyperbola $\mu_1 =
-1/\mu_2$ for large $|\mu_2|$, i.e., $\mu_2h(\mu_2)+1=O(1/\mu_2)$, as $\mu_2\to\infty$,
\item [($\mathcal{II}$)] for each $\mu_2<0$, $0<h(\mu_2)< \min\{-\mu_2/7, -1/\mu_2\}. $
\end{itemize}

To see that the above conjectures are equivalent to the ones stated in the
introduction we will transform system~\eqref{sys:bt_original}
into~\eqref{sys:perko}. If we apply the change of variables $u=(x+m)/(2m)$,
$v=y/(2m)^{3/2}$ and consider the new time $s=\sqrt{2m}\,t,$
system~\eqref{sys:bt_original} writes as
\begin{equation*}
\left\{{\begin{array}{l}
\dot u=v,\\
\dot v=u(u-1)+\dfrac{b-m}{\sqrt{2m}}v+\sqrt{2m}\,uv.
\end{array}}\right.
\end{equation*}
Hence, we have the following equivalence among the parameters $m,b$
and $\mu_1,\mu_2$:
\[
\mu_1=\frac{b-m}{\sqrt{2m}},\quad \mu_2=\sqrt{2m},
\]
and every curve of the form $\mu_1=f(\mu_2)$ is transformed in the variables $m$ and
$b$ into $b=m+\sqrt{2m}\,f(\sqrt{2m})$. In particular, the curves $\mu_1=0,$
$\mu_1\mu_2=-1$ and $\mu_1=-\mu_2/7$ are transformed into the straight lines $b=m,
b=m-1$ and $b=5m/7$, respectively. This fact shows that both Conjectures $\mathcal{II}$ are
equivalent.

To compare both Conjectures $\mathcal{I}$, notice that $\mu_1\mu_2=b-m.$
Therefore, we can write $\mu_2\mu_1+1=O(1/\mu_2)$ as $b-m+1=O(1/\sqrt{m})$, as we
wanted to prove. In fact, notice that
\[ b^*(m)=m+\sqrt{2m}\,h(\sqrt{2m}).
\]
Indeed, since $h$ is analytic and odd this equality proves that
$b^*(m)$ is analytic in $m$.

It is worth to mention that there is a third conjecture in Perko's
work: {\it For $\mu_2<0$ the function $h(\mu_2)$ has a unique
maximum}. The tools introduced in this paper seem not to be adequate
to approach this question.


\begin{thebibliography}{1}

\bibitem{Bog1975}
R.~I. Bogdanov.
\newblock Versal deformation of a singular point of a vector field on the plane
 in the case of zero eigenvalues.
\newblock {\em Funkcional Anal. i Prilozen}, {\bf 9} (1975) 63.

\bibitem{Bou1991}
M.~Boutat.
\newblock {\em Familles de champs de vecteurs du plan de type Takens-Bogdanov}.
\newblock PhD thesis, Universit\'e de Bourgogne, Bourgogne (France), 1991.

\bibitem{ChoLiWan94}
 S.~N. Chow, C. Li and D. Wang.
\newblock{\em Normal forms and bifurcation of planar vector fields}.
\newblock Cambridge University Press, Cambridge, 1994.

\bibitem{Dum77}
F.~Dumortier.
\newblock Singularities of vector fields on the plane.
\newblock {\em J. Differential Equations}, {\bf 23} (1977) 53--106.

\bibitem{DumRouSot87}
F.~Dumortier, R.~Roussarie, and J.~Sotomayor.
\newblock Generic {$3$}-parameter families of vector fields on the plane,
 unfolding a singularity with nilpotent linear part. {T}he cusp case of
 codimension {$3$}.
\newblock {\em Ergodic Theory Dynam. Systems}, {\bf 7} (1987) 375--413.

\bibitem{DumFidLi01}
 F.~Dumortier, P.~Fiddelaers, and C.~Li.
 \newblock Generic unfolding of the nilpotent saddle of codimension four.
 \newblock Global analysis of dynamical systems, 131--166, Inst. Phys., Bristol, 2001.

\bibitem{GasGiaTor10}
A.~Gasull, H.~Giacomini and J.~Torregrosa.
\newblock Some results on homoclinic and heteroclinic connections in planar
 systems.
\newblock {\em Nonlinearity}, {\bf 23} (2010) 2977--3001.

\bibitem{GucHol83}
J.~Guckenheimer and P.~Holmes.
\newblock {\em Nonlinear oscillations, dynamical systems, and bifurcations of
 vector fields}, volume~42 of {\em Applied Mathematical Sciences}.
\newblock Springer-Verlag, New York, 2002.
\newblock Revised and corrected reprint of the 1983 original.

\bibitem{Hay04}
H. Hayashi.
\newblock A global condition for non-existence of limit cycles of
Bogdanov-Takens system.
\newblock{\em Far East J. Math. Sci.} {\bf 14} (2004) 127--136.

\bibitem{LiRouWan90}
C.~Li, C.~Rousseau and X.~Wang.
\newblock A simple proof for the unicity
of the limit cycle in the Bogdanov-Takens system.
\newblock {\em Canad. Math. Bull.} {\bf 33} (1990) 84--92.

\bibitem{Kostr}
A.~I. Kostrikin.
\newblock {\em Introduction to algebra}.
\newblock Springer-Verlag, New York, 1982.
\newblock Translated from the Russian by Neal Koblitz, Universitext.

\bibitem{Kuz98}
Y.~A. Kuznetsov.
\newblock {\em Elements of applied bifurcation theory}, volume 112 of {\em
 Applied Mathematical Sciences}.
\newblock Springer-Verlag, New York, second edition, 1998.

\bibitem{Per92}
L.~M. Perko.
\newblock A global analysis of the {B}ogdanov-{T}akens system.
\newblock {\em SIAM J. Appl. Math.}, {\bf 52} (1992) 1172--1192.

\bibitem{Per94}
L.~M. Perko.
\newblock Homoclinic loop and multiple limit cycle bifurcation surfaces.
\newblock {\em Trans. Amer. Math. Soc.}, {\bf 344} (1994) 101--130.

\bibitem{Per}
L.~M. Perko.
\newblock {\em Differential equations and dynamical systems}. 
Third edition. Texts in Applied Mathematics, {\bf 7}. Springer-Verlag, New York, 2001.

\bibitem{RouWag95}
R. Roussarie and F. Wagener. \newblock A study of the
Bogdanov-Takens bifurcation. \newblock{\em Resenhas} {\bf 2} (1995)
1--25.

\bibitem{Tak1974}
F.~Takens.
\newblock Singularities of vector fields.
\newblock {\em Inst. Hautes \'Etudes Sci. Publ. Math.}, {\bf 43} (1974) 47--100.

\bibitem{Tak1974b}
F.~Takens.
\newblock Forced oscillations and bifurcations.
\newblock {\em Applications of Global Analysis I}, Comm. Inst. Rijksuniversitat Utrecht, {\bf 3}
(1974) 1--59.

\end{thebibliography}
\end{document}